\theoremstyle{plain}
\newtheorem{thm}{Theorem}[section]
\newtheorem{lem}[thm]{Lemma}
\newtheorem{cor}[thm]{Corollary}
\newtheorem*{theorem*}{Theorem}
\theoremstyle{definition}
\newtheorem{defn}[thm]{Definition}
\newtheorem{rmk}[thm]{Remark}
\newtheorem{notation}[thm]{Notation}
\newtheorem{exam}[thm]{Example}
\begin{document}

\title{A study on some combinatorial sets in Partial Semigroups}

\author{Arpita Ghosh}

\email{arpi.arpi16@gmail.com}

\email{arpitamath18@klyuniv.ac.in}

\address{Department of Mathematics, University of Kalyani,
Kalyani, Nadia-741235
West Bengal, India}
\keywords{Central sets theorem, Partial semigroups, Algebraic structure of Stone-\v{C}ech
compactification}
\begin{abstract}
In this article, we investigate the image and preimage of the important combinatorial sets such as central sets, $C$-sets, and $J_\delta$-sets  which play an important
role in the study of combinatorics under certain partial semigroup homomorphism. Using that we prove certain results which deal with the existence of $C$-set which are not central in partial semigroup framework.
\end{abstract}

\maketitle
\section{introduction}
The notion of the central subset of $\mathbb{N}$ was originally introduced by Furstenberg \cite{F81} in
terms of a topological dynamical system. Before defining central sets let us start with
original Central Sets Theorem due to Furstenberg
  
\begin{thm}(Original Central Sets Theorem)
 Let $l\in\mathbb{N}$ and for each $i\in\{1,2,\ldots,l\}$, and
let $\langle y_{i,n}\rangle_{n=1}^{\infty}$ be a sequence
in $\mathbb{Z}$. Let $C$ be a central subset of $\mathbb{N}$. Then
there exist sequences $\langle a_{n}\rangle_{n=1}^{\infty}$
in $\mathbb{N}$ and $\langle H_{n}\rangle_{n=1}^{\infty}$
in $\mathcal{P}_{f}(\mathbb{N})$ such that 
\begin{itemize}
\item[(1)] for all $n$, $\max H_{n} < \min H_{n+1}$ and
\item[(2)] for all $F\in\mathcal{P}_{f}(\mathbb{N})$ and all 
$i\in\{1,2,\ldots,l\},$ $$\sum_{n\in F}(a_{n}+\sum_{t\in H_{n}}y_{i,t})\in C$$.
\end{itemize}
\end{thm}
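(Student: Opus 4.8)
The plan is to translate centrality into the algebra of $\beta\mathbb{N}$ and then build the two sequences by a simultaneous induction. Recall that $C\subseteq\mathbb{N}$ is central exactly when $C$ is a member of some minimal idempotent $p$ in the semigroup $(\beta\mathbb{N},+)$, i.e.\ $p+p=p$ with $p$ lying in the smallest two-sided ideal $K(\beta\mathbb{N})$. So I would first fix such a $p$ with $C\in p$ and record the standard idempotent fact: for every $A\in p$ the set $A^{\star}=\{x\in A:-x+A\in p\}$ again lies in $p$, and for each $x\in A^{\star}$ one has $-x+A^{\star}\in p$.

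The induction builds $a_{1},\dots,a_{m}$ and $H_{1},\dots,H_{m}$ (with $\max H_{k}<\min H_{k+1}$) while maintaining the invariant that for every nonempty $F\subseteq\{1,\dots,m\}$ and every $i\le l$ the sum $s(F,i)=\sum_{n\in F}\bigl(a_{n}+\sum_{t\in H_{n}}y_{i,t}\bigr)$ lies in $C^{\star}$. To pass from $m$ to $m+1$, set $M=\max H_{m}$ and form the finite intersection
\[
B=C^{\star}\cap\bigcap\Bigl\{-s(G,i)+C^{\star}:\emptyset\ne G\subseteq\{1,\dots,m\},\ i\le l\Bigr\},
\]
which lies in $p$ because each $s(G,i)\in C^{\star}$ forces $-s(G,i)+C^{\star}\in p$ and a finite intersection of members of $p$ is a member of $p$. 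If I can produce a \emph{single} $a_{m+1}\in\mathbb{N}$ and a \emph{single} $H_{m+1}\in\mathcal{P}_{f}(\mathbb{N})$ with $\min H_{m+1}>M$ such that $a_{m+1}+\sum_{t\in H_{m+1}}y_{i,t}\in B$ for every $i\le l$, then adding this $i$-indexed term to each $s(G,i)$ keeps every new sum $s(G\cup\{m+1\},i)$ inside $C^{\star}$, closing the induction. The gap-constraint $\min H_{m+1}>M$ is obtained for free by running the construction on the shifted sequences $\langle y_{i,\,n+M}\rangle_{n}$ and translating the resulting index set back by $M$.

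Hence everything reduces to the \emph{key lemma}: every member $B$ of a minimal idempotent $p$ is a $J$-set, meaning that for any finite family $\langle y_{i,n}\rangle_{i\le l}$ there exist $a\in\mathbb{N}$ and $H\in\mathcal{P}_{f}(\mathbb{N})$ with $a+\sum_{t\in H}y_{i,t}\in B$ for all $i$. This is the step I expect to be the main obstacle, since the naive singleton choice $H=\{t\}$ fails outright: membership in the ultrafilter $p$ is not preserved by the integer translations $x\mapsto -y_{i,t}+x$, so mere idempotency is not enough and one genuinely needs minimality. I would establish the lemma by the standard route of showing that $\mathbb{J}=\{q\in\beta\mathbb{N}:\text{every }A\in q\text{ is a }J\text{-set}\}$ is a nonempty two-sided ideal of $(\beta\mathbb{N},+)$; since the smallest ideal is contained in every ideal, $p\in K(\beta\mathbb{N})\subseteq\mathbb{J}$, whence all members of $p$ are $J$-sets. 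Verifying that $\mathbb{J}$ is an ideal — essentially that $J$-sets are partition regular and interact well with the translations defining addition of ultrafilters — is where the real work lies.

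Once the lemma is in hand the theorem follows purely formally from the induction above, with conclusion (1) built into the choice $\min H_{n+1}>\max H_{n}$ and conclusion (2) being exactly the maintained invariant (every $s(F,i)\in C^{\star}\subseteq C$). I would note in passing that Furstenberg's original argument instead extracts the configuration from recurrence in a minimal topological dynamical system; I prefer the algebraic proof here, as it is the one consonant with the $\beta\mathbb{N}$ machinery and the $J_{\delta}$-set formalism used throughout the rest of the paper.
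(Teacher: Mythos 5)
Your outline is essentially correct, but it is a genuinely different route from the paper's: the paper offers no argument of its own here, it simply cites Furstenberg's Proposition 8.21, which is the \emph{dynamical} proof (uniform recurrence and proximality in a product dynamical system built from the sequences $\langle y_{i,n}\rangle$, with centrality taken in Furstenberg's original dynamical sense). You instead give the algebraic proof in $\beta\mathbb{N}$: fix a minimal idempotent $p$ with $C\in p$, maintain the invariant $s(F,i)\in C^{\star}$, and extend by intersecting the finitely many sets $-s(G,i)+C^{\star}$; that induction is sound, and the reduction of everything to the single lemma ``every member of a minimal idempotent is a $J$-set'' is exactly the modern De--Hindman--Strauss decomposition. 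You are right that this lemma is where the content lives; its proof (nonemptiness and ideal property of $\mathbb{J}$ via partition regularity of $J$-sets, which rests on the Hales--Jewett theorem) is precisely the argument this paper replays in its Theorem 3.3 for $J_{\delta}$-sets, so your route is the one consonant with the rest of the paper even though it is not the cited proof. Two things to tighten if you write it out: (i) the $y_{i,n}$ lie in $\mathbb{Z}$, not $\mathbb{N}$, so you should either invoke the $J$-set lemma in the form that allows integer-valued sequences or pass to $\beta\mathbb{Z}$, using the standard fact that $K(\beta\mathbb{N})=\beta\mathbb{N}\cap K(\beta\mathbb{Z})$ so that a minimal idempotent of $\beta\mathbb{N}$ witnessing centrality of $C$ in $\mathbb{N}$ also witnesses it in $\mathbb{Z}$; (ii) the equivalence of Furstenberg's dynamical notion of ``central'' with the algebraic one you use is itself a nontrivial theorem (Bergelson--Hindman), and strictly speaking your argument proves the theorem for algebraically central sets, which is the sense used everywhere else in this paper. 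As a comparison of what each approach buys: Furstenberg's argument is self-contained within topological dynamics but does not visibly generalize, whereas the algebraic argument scales directly to the stronger Central Sets Theorems and to the adequate partial semigroup setting that is the actual subject of the paper.
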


\begin{proof}
\cite[Proposition 8.21]{F81}.
\end{proof}

This theorem has several combinatorial consequences such as Rado's theorem which deals with the regularity of the system of integral equations. Central sets in natural numbers are known to have substantial combinatorial structure. For example, any central set contains arbitrary long arithmatic progressions, all finite sums of distinct terms of an infinite sequence.

  In \cite{BH90}, Vitaly Bergelson and Neil Hindman came with an algebraic characterization of the central subsets of natural numbers in terms of the algebra of $\beta \mathbb{N}.$ Analogously, the notion of a central set has been defined in an arbitrary discrete  semigroup $S.$ To define the central subsets in a semigroup,  we recall a brief introduction of the algebraic structure of $\beta S$ for a discrete semigroup $(S, \cdot)$.

  We take the points of $\beta S$ to be
the ultrafilters on $S$, identifying the principal ultrafilters with
the points of $S$ and thus pretending that $S\subseteq\beta S$.
Given $A\subseteq S$ let us set, $\overline{A}=\{p\in\beta S\mid A\in p\}$.
 Then the set $\{\overline{A}\mid A\subseteq S\}$ is a basis for a topology
on $\beta S$. The operation $\cdot$ on $S$ can be extended to the Stone-\v{C}ech
compactification $\beta S$ of $S$ so that $(\beta S,\cdot)$ is a compact
right topological semigroup (meaning that for any $p\in\beta S$,
the function $\rho_{p}:\beta S\rightarrow\beta S$ defined by $\rho_{p}(q)=q\cdot p$
is continuous) with $S$ contained in its topological center (meaning
that for any $x\in S$, the function $\lambda_{x}:\beta S\rightarrow\beta S$
defined by $\lambda_{x}(q)=x \cdot q$ is continuous). Given $p,q\in\beta S$
and $A\subseteq S$, $A\in p \cdot q$ if and only if $\{x\in S \mid x^{-1} \cdot A\in q\}\in p$,
where $x^{-1} \cdot A=\{y\in S \mid x\cdot y\in A\}$.

A nonempty subset $I$ of a semigroup $(T,\cdot)$ is called a left ideal
of ${T}$ if $T\cdot I\subset I$, a right ideal if $I\cdot T\subset I$,
and a two-sided ideal (or simply an ideal) if it is both a left and
a right ideal. A minimal left ideal is a left ideal that does not
contain any proper left ideal. Similarly, we can define a minimal right
ideal.

Any compact Hausdorff right topological semigroup $(T,\cdot)$ has a smallest
two sided ideal

\[
\begin{array}{rl}
K(T) = & \bigcup\{L \mid L\text{ is a minimal left ideal of }T\}\\
  = &\bigcup\{R \mid R\text{ is a minimal right ideal of }T\}
\end{array}
\]

Given a minimal left ideal $L$ and a minimal right ideal $R$, $L\cap R$
is a group, and in particular, contains an idempotent. An idempotent
in $K(T)$ is called a minimal idempotent. If $p$ and $q$ are idempotents
in $T$, we write $p\leq q$ if and only if $p\cdot q=q\cdot p=p$. An idempotent
is minimal with respect to this relation if and only if it is a member
of the smallest ideal. See \cite{HS12}  for an elementary introduction
to the algebra of $\beta S$ and for any unfamiliar
details.

Now we recall the central sets of a semigroup as

\begin{defn}
Consider a discrete semigroup $S$ and a subset $A$ of $S.$ Then $A$ is central if and only if there is an idempotent in $K(\beta S) \cap \overline{A}.$  
\end{defn}

In \cite{HS12}, the Central Sets Theorem was extended to arbitrary semigroups by allowing the choice of countably many sequences at a time.  More extended version of the Central Sets Theorem considering all sequences at one time has been established  in \cite{DHS08}. The sets which satisfy the conclusion of the above Central Sets Theorem are the objects that matter. They have several applications in the study of all non-trivial partition regular systems of homogeneous integral equations. Hindman and Strauss understand their importance and named them as $C$-sets.  Also, they found similar kind of algebraic characterizations of $C$-sets in terms of the algebra of $\beta S$  and $J$-sets, which was also introduced by them. We recall the definition of $C$-set which will be useful later.

\begin{defn}
Let $(S,+)$ be a commutative semigroup and let $A\subseteq S$ and
$\mathcal{T}=S^{\mathbb{N}}$, the set of sequences in $S$. The
set $A$ is a $C$-set if and only if there exist functions $\alpha:\mathcal{P}_{f}(\mathcal{T})\rightarrow S$
and $H:\mathcal{P}_{f}(\mathcal{T})\rightarrow\mathcal{P}_{f}(\mathbb{N})$
such that

\noindent (1) if $F,G\in\mathcal{P}_{f}(\mathcal{T})$ and $F\subsetneq G$,
then $ \max H(F) < \min H(G)$ and

\noindent (2) whenever $m\in\mathbb{N},G_{1},G_{2},\ldots,G_{m}\in\mathcal{P}_{f}(\mathcal{T}),G_{1}\subsetneq G_{2}\subsetneq\ldots\subsetneq G_{m}$
and for each $i\in\{1,2,\ldots,m\}$ , $f_{i}\in G_{i}$, one has
$\sum_{i=1}^{m}(\alpha(G_{i})+\sum_{t\in H(G_{i})}f_i(t))\in A$.
\end{defn}

In this article, the main object of study is partial semigroups. In \cite{AG18}, the author extended the Central Sets Theorem obtained by taking all possible sequences in commutative adequate partial semigroup. It states as

\begin{thm}\label{adeprod}
Let $(S,\ast)$ be a commutative adequate partial semigroup and let $C$
be a central subset of $S$. Let $\mathcal{T}_S$ be the set of all adequate sequences in $S.$ There exist functions $\alpha:\mathcal{P}_{f}(\mathcal{T}_S)\rightarrow S$
and $H:\mathcal{P}_{f}(\mathcal{T}_S)\rightarrow\mathcal{P}_{f}(\mathbb{N})$
such that
\begin{itemize}
\item[(1)] $F,G\in\mathcal{P}_{f}(\mathcal{T}_S)$ and $F\subsetneq G$,
then $\max H(F) < \min H(G)$ and
\item[(2)] whenever $m\in\mathbb{N}$, $G_{1},G_{2},\ldots,G_{m}\in\mathcal{P}_{f}(\mathcal{T}_S)$, 
$G_{1}\subsetneq G_{2}\subsetneq\ldots\subsetneq G_{m}$, and for each 
$i\in\{1,2,\ldots,m\}$, $f_i \in G_{i}$, one 
has\hfill\break 
$\prod_{i=1}^{m}(\alpha(G_{i})\ast\prod_{t\in H(G_{i})}f_i(t))\in C$.
\end{itemize}
\end{thm}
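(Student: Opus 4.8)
The plan is to run the recursive construction common to every Central Sets Theorem, but carried out inside the partial-semigroup compactification $\delta S$ and with the $J_\delta$-sets playing the role that ordinary $J$-sets play in the discrete setting of \cite{HS12}. Throughout I use the framework of \cite{AG18}: for a commutative adequate partial semigroup, $\delta S$ is a compact right topological subsemigroup of $\beta S$ onto which $\ast$ extends compatibly with the $\sigma$-sets supplied by adequacy, and centrality of $C$ means that $K(\delta S)\cap\overline{C}$ contains an idempotent.

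First I would fix a minimal idempotent $p\in K(\delta S)\cap\overline{C}$. The key structural ingredient is that $J_\delta(S)=\{q\in\delta S:\text{every member of }q\text{ is a }J_\delta\text{-set}\}$ is a two-sided ideal of $\delta S$; since it therefore contains $K(\delta S)$, it contains $p$, so every member of $p$ is a $J_\delta$-set. I then pass to the idempotent refinement $C^\star=\{x\in C:x^{-1}\ast C\in p\}$ and record the two standard facts that make the recursion close up: $C^\star\in p$, and for every $x\in C^\star$ one has $x^{-1}\ast C^\star\in p$. Both follow from $p\ast p=p$ exactly as in the discrete case, the only new point being to keep the relevant products inside the domain of $\ast$.

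Next I would define $\alpha(F)$ and $H(F)$ by induction on $|F|$, maintaining the invariant that every partial product read off an increasing chain lies in $C^\star$. At a set $F$, let $x$ range over the finitely many prefix products $\prod_{i=1}^{m-1}(\alpha(G_i)\ast\prod_{t\in H(G_i)}f_i(t))$ arising from chains $G_1\subsetneq\cdots\subsetneq G_{m-1}\subsetneq F$ with $f_i\in G_i$; by the induction hypothesis each such $x$ lies in $C^\star$, so $B=C^\star\cap\bigcap_x(x^{-1}\ast C^\star)$ is a finite intersection of members of $p$, hence itself a member of $p$ and thus a $J_\delta$-set. Applying the $J_\delta$-property to the finite family $F\subseteq\mathcal{T}_S$ produces $a\in S$ and $H\in\mathcal{P}_f(\mathbb{N})$ with $a\ast\prod_{t\in H}f(t)\in B$ for every $f\in F$; to also force condition (1) I would first replace every $f\in F$ by its shift past $N=\max\{\max H(G):G\subsetneq F\}$ (legitimate since $\mathcal{T}_S$ is closed under shifts) and translate $H$ back, so that $\min H>N$. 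Setting $\alpha(F)=a$ and $H(F)=H$, the membership $a\ast\prod_{t\in H}f(t)\in C^\star$ settles the length-one extension, while membership in each $x^{-1}\ast C^\star$ gives $x\ast(a\ast\prod_{t\in H}f(t))\in C^\star\subseteq C$, which is precisely condition (2) for the chains ending at $F$.

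The main obstacle is not the algebra of $p$ but the bookkeeping forced by the partiality of $\ast$: at each stage one must verify that the prefix products $x$, the new factor $a\ast\prod_{t\in H}f(t)$, and their concatenation are all genuinely defined. This is where adequacy and the restriction to adequate sequences are essential — by drawing $a$ from the appropriate $\sigma$-sets and taking $\min H$ large, one keeps every factor inside the domain dictated by the previously chosen data, so that the formal products in (2) make sense. I expect this definedness verification to be the delicate part of the write-up; granting it, the argument is a faithful transcription of the discrete Central Sets Theorem with $\beta S$ replaced by $\delta S$ and $J$-sets by $J_\delta$-sets.
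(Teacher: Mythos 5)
The paper itself supplies no argument for this theorem: it is recalled from \cite{AG18} with the proof given by citation only, so the comparison has to be against the proof of \cite[Theorem 2.4]{AG18}. Your sketch is sound and is, in outline, the canonical argument: fix a minimal idempotent $p\in K(\delta S)\cap\overline{C}$, pass to $C^{\ast}=\{x\in C\mid x^{-1}C\in p\}$ using Lemma \ref{1.15}, and recurse on $|F|$, applying the $J_{\delta}$-property of the finite intersection $C^{\ast}\cap\bigcap_{x}x^{-1}C^{\ast}\in p$ (the prefix products $x$ are finitely many because they come from chains of proper subsets of the finite set $F$), with the shift past $N=\max\{\max H(G)\mid G\subsetneq F\}$ securing condition (1). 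Where you genuinely diverge from the cited source is the provenance of the key largeness fact: you extract ``every member of $p$ is a $J_{\delta}$-set'' from the statement that $J_{\delta}(S)$ is a nonempty two-sided ideal of $\delta S$ (Theorem \ref{ideal} of the present paper), whereas \cite{AG18} proves the Central Sets Theorem \emph{before} introducing $J_{\delta}$-sets, establishing the needed instance directly for members of minimal idempotents in the style of \cite{DHS08} and \cite{HS12}. Your route is legitimate and arguably cleaner given the machinery now available, but you should say explicitly why it is not circular: Theorem \ref{ideal} rests on the Hales--Jewett theorem and on the adequacy-based observation that each $\sigma_S(K)$ contains a $J_{\delta}$-set, not on the Central Sets Theorem. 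Two points to tighten in a full write-up: the inference $K(\delta S)\subseteq J_{\delta}(S)$ requires noting that $J_{\delta}(S)\neq\emptyset$; and the definedness bookkeeping you defer is resolved precisely by choosing $W$ to contain all the prefix products $x$, so that $a\in\sigma_S(W)$ and $\prod_{t\in H}f(t)\in\sigma_S(W\ast a)$ force $x\ast a\ast\prod_{t\in H}f(t)$ to be defined.
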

\begin{proof}
\cite[Theorem 2.4]{AG18}.
\end{proof} 
 In the same article, the author also characterized those sets in any adequate commutative partial semigroups which satisfy the new version of the Central Sets Theorem \cite[Theorem 2.4]{AG18} and introduced the analogous notion of $C$-set and $J$-set, namely respectively $C$-set and $J_{\delta}$-set in adequate partial semigroup as
 
\begin{defn}\label{cset}
Let $(S,\ast)$ be a commutative adequate partial semigroup and $A$ be a subset of $S.$  Let $\mathcal{T}_S$ be the set of all adequate sequences in $S.$\\
a) $A$ is said to be a $C$-set if and only if there exist functions
$\alpha:\mathcal{P}_{f}(\mathcal{T}_S)\rightarrow S$ and $H:\mathcal{P}_{f}(\mathcal{T}_S)\rightarrow\mathcal{P}_{f}(\mathbb{N})$
such that
\begin{itemize}
\item[(1)] if $F,G\in\mathcal{P}_{f}(\mathcal{T}_S)$ and $F\subsetneq G$,
then $\max H(F) < \min H(G)$ and
\item[(2)] whenever $m\in\mathbb{N}$, $G_{1},G_{2},\ldots,G_{m}\in\mathcal{P}_{f}(\mathcal{T}_S)$, 
$G_{1}\subsetneq G_{2}\subsetneq\ldots\subsetneq G_{m}$
and for each $i\in\{1,2,\ldots,m\}$, $f_i \in G_{i}$,
one has\hfill\break
$\prod_{i=1}^{m}(\alpha(G_{i})\ast\prod_{t\in H(G_{i})}f_i(t)\in A$.
\end{itemize}
b) $A$ is a $J_{\delta}$-set if and only if whenever 
$F\in\mathcal{P}_{f}(\mathcal{T}_S)$, $W\in\mathcal{P}_{f}(S)$, there exist
$a\in\sigma(W)$ and $H\in\mathcal{P}_{f}(\mathbb{N})$ such
that for each $f\in F$, $\prod_{t\in H}f(t)\in\sigma(W\ast a)$ and
$a\ast\prod_{t\in H}f(t)\in A$.\\
c) $J_{\delta}(S)=\{p\in\delta S:$ for all $A\in p\,,\,A$ is a $J_{\delta}$-set$\}$.
\end{defn}

 For a semigroup $S$, the $C$-sets, defined in purely combinatorial terms, are characterized as members of idempotents in $J(S)$. In \cite{AG18}, the author studied a similar kind of properties in the context of adequate partial semigroups and introduced the set $J_{\delta}(S)$. In this article, we first show that for a commutative adequate partial semigroup  sets are ideals in $\delta S$ ({\it cf.} Theorem 3.4). Next, we study the behaviour of these combinatorial sets under a surjective partial semigroup homomorphism ({\it cf.} Theorem 5.5., Theorem 5.6., Theorem 5.7., and Theorem 5.9.). With the help of these results we able to construct a way to construct enormous amount of $C$-sets which are not central.

\begin{notation}
$\mathcal{T}_S=$ The set of all adequate sequences in $S.$
\end{notation}

\section{Preliminaries}
In this section, we recall some definitions and results from partial semigroup context. For more details, the readers are referred to \cite{HS12}. We start with the following definition
\begin{defn}
(Partial semigroup) A partial semigroup is  defined as a pair $(G, \ast)$ where $\ast$ is an operation 
defined on a subset $X$ of $G \times G$ and satisfies the statement that for all 
$x$, $y$, $ z$ in $G$, $(x \ast y) \ast z = x \ast (y \ast z) $ in the sense that if either side 
is defined, so is the other and they are equal. A partial semigroup is commutative if ${x\ast y=y\ast x}$
for every ${(x,y)\in X}$.
\end{defn}
Partial semigroups which arise naturally our minds. 
\begin{exam}
Any semigroup is an obvious example of a partial semigroup.
\end{exam}

\begin{exam}
 Let us consider ${G=\mathcal{P}_f({\mathbb{N}})}= \{F \mid \emptyset \neq F \subseteq \mathbb{N}$ and $F$ is finite$\}$ and let ${X=\{(\alpha,\beta)\in G \times G \mid \alpha\cap\beta=\emptyset\}}$
be the family of all pairs of disjoint sets, and let ${\ast:X\rightarrow G}$
be the union. It is easy to check that this is a commutative partial
semigroup. We shall denote this partial semigroup as $(\mathcal{P}_f({\mathbb{N}}),\uplus)$.

\end{exam}

Next, we define homomorphisms between partial semigroups. 
\begin{defn}
Let $(S, \ast)$ and $(T, \ast^{\prime})$ be partial semigroups and let $h : S \rightarrow T.$ Then $h$ is a partial semigroup homomorphism if and only if whenever $y \in \phi_S(x),$ one has that $h (y) \in \phi_T(h(x))$ and $h(x \ast y)= h(x) \ast^{\prime} h(y).$ Here $\phi_S(s)$ stands for the set $\{t\in S \mid s\ast t$ is defined in $S\}$.
\end{defn}

For a semigroup, it is known that there is a notion of compactification, namely the Stone-$\check{C}$ech compactification, which is a compact right topological semigroup. One can do the same thing for any partial semigroup but that won't be a semigroup. To get a compact topological semigroup out of a partial semigroup we recall the following definitions.
\begin{defn}
Let $(S,\ast)$ be a partial semigroup.\\
(a) For $s\in S$, $\phi_S(s)=\{t\in S \mid s\ast t$ is defined in $S\}$.\\
(b) For $H\in\mathcal{P}_{f}(S)$, $\sigma_S(H)=\bigcap_{s \in H}\phi_S(s)$.\\
(c) $(S,\ast)$ is adequate if and only if $\sigma_S(H)\neq\emptyset$
for all $H\in\mathcal{P}_{f}(S)$.\\
(d) $\delta S=\bigcap_{x\in S}cl_{\beta S}(\phi_S(x))=\bigcap_{H\in\mathcal{P}_{f}(S)}cl_{\beta S}(\sigma_S(H))$.
\end{defn}

So, the partial semigroup $(\mathcal{P}_f({\mathbb{N}}),\uplus)$ is adequate. We are specifically interested in 
adequate partial semigroups as they lead to an interesting subsemigroup $\delta S$ of $\beta S$,
the Stone-\v{C}ech compactification of $S$ which  is itself
a compact right topological semigroup. Notice that adequacy of $S$
is exactly what is required to guarantee that $\delta S\neq\emptyset$.
If $S$ is, in fact, a semigroup, then $\delta S=\beta S$.

Now we recall some of the basic properties of the operation $\ast$
in $\delta S$.

\begin{defn}
Let $(S,\ast)$ be a partial semigroup. For $s\in S$ and $A\subseteq S$,
$s^{-1}A=\{t\in\phi_S(s) \mid s\ast t\in A\}$.
\end{defn}

\begin{lem}
Let $(S,\ast)$ be a partial semigroup, let $A\subseteq S$ and let
$a,b,c\in S$. Then $c\in b^{-1}(a^{-1}A)$ if and only if both $b\in\phi_S(a)$
and  $c\in(a\ast b)^{-1}A$.
In particular, if $b\in\phi_S(a)$, then $b^{-1}(a^{-1}A)=(a\ast b)^{-1}A$.
\end{lem}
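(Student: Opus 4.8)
The plan is to prove the biconditional by unwinding the two definitions involved and invoking the defining associativity axiom of a partial semigroup exactly once in each direction. First I would expand the left-hand membership. By the definition of $b^{-1}(\cdot)$, the statement $c\in b^{-1}(a^{-1}A)$ means precisely that $c\in\phi_S(b)$ (so that $b\ast c$ is defined) together with $b\ast c\in a^{-1}A$; unwinding the latter via the definition of $a^{-1}(\cdot)$ gives $b\ast c\in\phi_S(a)$ (so that $a\ast(b\ast c)$ is defined) and $a\ast(b\ast c)\in A$. Thus the left side is equivalent to the conjunction of three conditions: $b\ast c$ is defined, $a\ast(b\ast c)$ is defined, and $a\ast(b\ast c)\in A$. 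In the same way I would expand the right side, $b\in\phi_S(a)$ together with $c\in(a\ast b)^{-1}A$, into the three conditions: $a\ast b$ is defined, $(a\ast b)\ast c$ is defined, and $(a\ast b)\ast c\in A$.

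For the forward direction I would assume the three left-hand conditions hold. Since $a\ast(b\ast c)$ is defined, the associativity axiom asserts that $(a\ast b)\ast c$ is also defined and equals $a\ast(b\ast c)$; in particular $a\ast b$ must itself be defined, that is $b\in\phi_S(a)$, and $(a\ast b)\ast c=a\ast(b\ast c)\in A$. This yields exactly the right-hand conditions. The reverse direction is symmetric: starting from the three right-hand conditions, the definedness of $(a\ast b)\ast c$ forces, via the same axiom, that $a\ast(b\ast c)$ is defined and equal to it, whence $b\ast c$ is defined (so $c\in\phi_S(b)$), $b\ast c\in\phi_S(a)$, and $a\ast(b\ast c)\in A$, recovering the left-hand conditions.

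The only point requiring genuine care, rather than any conceptual obstacle, is the bookkeeping of definedness: in a partial semigroup one cannot even write $a\ast(b\ast c)$ or $(a\ast b)\ast c$ before verifying that the inner product exists, and the associativity axiom transfers definedness only under the hypothesis that one of the two iterated products is already defined. Keeping the chain of ``is defined'' conditions aligned with the set-membership conditions at each step is essentially the whole content of the argument. Finally, the ``in particular'' clause is immediate: when $b\in\phi_S(a)$ holds, the established biconditional reduces to $c\in b^{-1}(a^{-1}A)$ if and only if $c\in(a\ast b)^{-1}A$, and since this equivalence holds for every $c\in S$, the two sets $b^{-1}(a^{-1}A)$ and $(a\ast b)^{-1}A$ coincide.
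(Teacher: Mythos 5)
Your proof is correct: the unwinding of both memberships into their definedness-plus-membership components, followed by a single application of the partial-semigroup associativity axiom in each direction, is exactly the argument needed, and you handle the definedness bookkeeping (the only delicate point) properly. The paper itself gives no proof here, deferring to \cite[Lemma 2.3]{HM01}, and your argument is the standard one found there.
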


\begin{proof}
\cite[Lemma 2.3]{HM01}
\end{proof}
\begin{defn}
Let $(S,\ast)$ be an adequate partial semigroup.\\
(a) For $a\in S$ and $q\in\overline{\phi_S(a)}$, $a\ast q=\{A\subseteq S \mid a^{-1}A\in q\}$.\\
(b) For $p\in\beta S$ and $q\in\delta S$, $p\ast q=\{A\subseteq S \mid \{a^{-1}A\in q\}\in p\}$.
\end{defn}

\begin{lem}
Let $(S,\ast)$ be an adequate partial semigroup.\\
(a) If $a\in S$ and $q\in\overline{\phi_S(a)}$, then $a\ast q\in\beta S$.\\
(b) If $p\in\beta S$ and $q\in\delta S$, then $p\ast q\in\beta S$.\\
(c) Let $p\in\beta S,q\in\delta S$, and $a\in S$. Then $\phi_S(a)\in p\ast q$
if and only if $\phi_S(a)\in p$.\\
(d) If $p,q\in\delta S$, then $p\ast q\in\delta S$.
\end{lem}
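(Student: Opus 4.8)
The plan is to prove Lemma 2.9 — the four assertions (a)–(d) about the operation $\ast$ on $\beta S$ and $\delta S$ — by reducing each statement to routine ultrafilter computations built on the preceding Lemma 2.7 and the definitions of $a\ast q$ and $p\ast q$. These are the standard foundational facts one needs before treating $\delta S$ as a compact right topological semigroup, so the work is mostly bookkeeping with ultrafilters; I would present each part in turn.

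For part (a), I would fix $a\in S$ and $q\in\overline{\phi_S(a)}$ and verify that $a\ast q=\{A\subseteq S \mid a^{-1}A\in q\}$ is an ultrafilter, i.e. that it is closed under supersets and finite intersections, does not contain $\emptyset$, and for each $A$ contains exactly one of $A$ and $S\setminus A$. The key observation is that $a^{-1}(A\cap B)=a^{-1}A\cap a^{-1}B$ and, crucially, since $q\in\overline{\phi_S(a)}$ we have $\phi_S(a)\in q$, so $a^{-1}A\,\cup\,a^{-1}(S\setminus A)=\phi_S(a)\in q$ while $a^{-1}A\cap a^{-1}(S\setminus A)=\emptyset$; this forces exactly one of $a^{-1}A,a^{-1}(S\setminus A)$ into $q$, giving the ultrafilter property. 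For part (b), with $p\in\beta S$ and $q\in\delta S$, I would similarly check that $p\ast q=\{A\subseteq S \mid \{a\in S \mid a^{-1}A\in q\}\in p\}$ is an ultrafilter; writing $A^{\star}=\{a\in S \mid a^{-1}A\in q\}$, the relations $(A\cap B)^{\star}=A^{\star}\cap B^{\star}$ and $A^{\star}\cup(S\setminus A)^{\star}=S$ (the latter because $q\in\delta S$ guarantees $\phi_S(a)\in q$ for the relevant $a$, making $a\ast q$ an ultrafilter by part (a)) reduce everything to the fact that $p$ is an ultrafilter.

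For part (c), I would compute directly: $\phi_S(a)\in p\ast q$ iff $\{b\in S \mid b^{-1}\phi_S(a)\in q\}\in p$. Here the natural strategy is to identify the set $\{b \mid b^{-1}\phi_S(a)\in q\}$ up to a $p$-large set, using that $q\in\delta S$ means $\phi_S(a)\in q$ for every $a$, together with Lemma 2.7 to rewrite $b^{-1}(a^{-1}A)$-type expressions; I expect this to collapse to showing the set differs from $\phi_S(a)$ by something negligible for $p$, yielding the stated equivalence. Part (d) is the payoff: given $p,q\in\delta S$ I must show $p\ast q\in\delta S=\bigcap_{x\in S}\overline{\phi_S(x)}$, i.e. $\phi_S(x)\in p\ast q$ for all $x\in S$. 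By part (c) this is equivalent to $\phi_S(x)\in p$ for all $x$, which holds precisely because $p\in\delta S$; so (d) follows immediately from (c) and the definition of $\delta S$.

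The main obstacle I anticipate is part (c), and by extension the well-definedness steps in (a) and (b) that hinge on the adequacy/$\delta S$ hypotheses. The delicate point throughout is that $a^{-1}A$ only captures elements $t\in\phi_S(a)$, so complementation does not behave as in the total-semigroup case; every ultrafilter-membership argument must carry the side condition $\phi_S(a)\in q$ (which is exactly what $q\in\overline{\phi_S(a)}$ or $q\in\delta S$ provides) to recover the clean dichotomy between $A$ and its complement. Getting the bookkeeping in part (c) right — correctly identifying $\{b \mid b^{-1}\phi_S(a)\in q\}$ as a $p$-large set and invoking Lemma 2.7 at the right moment — is where the real care is needed; once (c) is established, (d) is essentially immediate.
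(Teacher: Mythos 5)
Your proposal is correct and is essentially the intended argument: the paper gives no proof of its own here, deferring to \cite[Lemma 2.7]{HM01}, and that proof is precisely the routine ultrafilter verification you describe, with the dichotomy in (a) and (b) hinging on $a^{-1}A\cup a^{-1}(S\setminus A)=\phi_S(a)\in q$ and with (d) reduced to (c). The one point worth sharpening is (c): applying the paper's Lemma 2.7 (i.e.\ \cite[Lemma 2.3]{HM01}) with $A=S$, so that $a^{-1}S=\phi_S(a)$, shows $b^{-1}\phi_S(a)=\phi_S(a\ast b)\in q$ when $b\in\phi_S(a)$ and $b^{-1}\phi_S(a)=\emptyset$ otherwise, whence $\{b\in S\mid b^{-1}\phi_S(a)\in q\}$ is \emph{exactly} $\phi_S(a)$ (not merely equal to it up to a $p$-negligible set), which gives the stated equivalence immediately.
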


\begin{proof}
\cite[Lemma 2.7]{HM01}.
\end{proof}
\begin{lem}\label{1.12}
Let $(S,\ast)$ be an adequate partial semigroup and let $q\in\delta S$.
Then the function $\rho_{q}:\beta S\rightarrow\beta S$ defined by
$\rho_{q}(p)=p\ast q$ is continuous.
\end{lem}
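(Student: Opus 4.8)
The plan is to exploit the fact that continuity of a map into $\beta S$ need only be checked against the basic clopen sets $\overline{A}=\{p\in\beta S\mid A\in p\}$, which by the remarks in the introduction form a basis for the topology of $\beta S$. Thus it suffices to show that $\rho_q^{-1}(\overline{A})$ is open for every $A\subseteq S$. First I would record that $\rho_q$ is well defined as a map into $\beta S$: for $p\in\beta S$ and the fixed $q\in\delta S$, the product $p\ast q$ lies in $\beta S$ by the earlier lemma describing the operation $\ast$.

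The central computation is to unwind the membership condition. By definition of $\rho_q$,
$$\rho_q^{-1}(\overline{A})=\{p\in\beta S\mid p\ast q\in\overline{A}\}=\{p\in\beta S\mid A\in p\ast q\}.$$
Now I would invoke the definition of $p\ast q$: one has $A\in p\ast q$ precisely when $\{a\in S\mid a^{-1}A\in q\}\in p$. Setting $B=\{a\in S\mid a^{-1}A\in q\}$, the point to emphasise is that $B$ is a fixed subset of $S$ depending only on $A$ and the fixed ultrafilter $q$, and not on the variable $p$; here $a^{-1}A=\{t\in\phi_S(a)\mid a\ast t\in A\}$ is a genuine subset of $S$, so the condition $a^{-1}A\in q$ is meaningful for each $a$. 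Consequently
$$\rho_q^{-1}(\overline{A})=\{p\in\beta S\mid B\in p\}=\overline{B},$$
which is a basic open set.

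Since the preimage of each basic open set is open, $\rho_q$ is continuous, completing the argument. I do not anticipate a genuine obstacle here: the only points requiring care are verifying that $B$ is independent of $p$ and that passing to the partial-semigroup definition of $a^{-1}A$ causes no difficulty, both of which are routine. In particular, nothing beyond the definition of $\ast$ on $\beta S$ and the basis $\{\overline{A}\}$ is needed, so the proof runs exactly as in the full discrete semigroup case.
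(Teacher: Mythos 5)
Your argument is correct and is the standard one: the key identity $\rho_q^{-1}(\overline{A})=\overline{B}$ with $B=\{a\in S\mid a^{-1}A\in q\}$, which is a fixed subset of $S$ once $A$ and $q$ are fixed, immediately gives continuity since the sets $\overline{A}$ form a basis. The paper itself offers no proof but merely cites \cite[Lemma 2.8]{HM01}, and the proof there is exactly this computation, so your proposal matches the intended argument.
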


\begin{proof}
\cite[Lemma 2.8]{HM01}.
\end{proof}
\begin{lem}
Let $p\in\beta S$ and let $q,r\in\delta S$. Then $p\ast(q\ast r)=(p\ast q)\ast r$.
\end{lem}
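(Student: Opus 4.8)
The plan is to prove the equality of the two ultrafilters $p\ast(q\ast r)$ and $(p\ast q)\ast r$ by showing that they contain exactly the same subsets of $S$, since an ultrafilter is determined by its members. First I would record that both products are legitimate: because $q,r\in\delta S$, the preceding lemmas give $q\ast r\in\delta S$ and $p\ast q\in\beta S$, so each of $p\ast(q\ast r)$ and $(p\ast q)\ast r$ is an instance of the operation of a point of $\beta S$ against a point of $\delta S$, which is exactly the situation covered by the definition of $\ast$. Then I would fix an arbitrary $A\subseteq S$ and unwind both sides through that definition.

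On the one hand, $A\in p\ast(q\ast r)$ means $\{a\in S: a^{-1}A\in q\ast r\}\in p$, and a second application of the definition turns $a^{-1}A\in q\ast r$ into $\{b\in S: b^{-1}(a^{-1}A)\in r\}\in q$. On the other hand, writing $B=\{c\in S: c^{-1}A\in r\}$, one has $A\in(p\ast q)\ast r$ if and only if $B\in p\ast q$, i.e.\ $\{a\in S: a^{-1}B\in q\}\in p$. The heart of the matter is to compute $a^{-1}B$. By the definition of the preimage, $a^{-1}B=\{b\in\phi_S(a): (a\ast b)^{-1}A\in r\}$, and here the earlier lemma enters: for $b\in\phi_S(a)$ it gives $(a\ast b)^{-1}A=b^{-1}(a^{-1}A)$, whence $a^{-1}B=\{b\in\phi_S(a): b^{-1}(a^{-1}A)\in r\}$.

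Comparing the two sides, the only discrepancy is the extra constraint $b\in\phi_S(a)$ carried by $a^{-1}B$, while the first side features the unrestricted set $\{b\in S: b^{-1}(a^{-1}A)\in r\}$. This is precisely where I expect the one genuine (though mild) obstacle to sit, and it is dissolved by adequacy. Since $q\in\delta S=\bigcap_{x\in S}cl_{\beta S}(\phi_S(x))$, we have $\phi_S(a)\in q$ for every $a\in S$, and intersecting a set with a member of the ultrafilter $q$ does not alter its $q$-membership. Hence $\{b\in S: b^{-1}(a^{-1}A)\in r\}\in q$ if and only if $\{b\in\phi_S(a): b^{-1}(a^{-1}A)\in r\}=a^{-1}B\in q$. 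Therefore, for each fixed $a$ the two membership conditions coincide, so the two subsets of $S$ against which $p$ is tested, namely $\{a\in S: \{b\in S: b^{-1}(a^{-1}A)\in r\}\in q\}$ and $\{a\in S: a^{-1}B\in q\}$, are literally the same set; one lies in $p$ exactly when the other does. This yields $A\in p\ast(q\ast r)$ if and only if $A\in(p\ast q)\ast r$, and as $A$ was arbitrary the two ultrafilters are equal.
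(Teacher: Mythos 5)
Your proof is correct: the paper itself offers no argument, merely citing \cite[Lemma 2.9]{HM01}, and your unwinding of the definitions together with the identity $b^{-1}(a^{-1}A)=(a\ast b)^{-1}A$ for $b\in\phi_S(a)$ and the fact that $\phi_S(a)\in q$ for $q\in\delta S$ is exactly the standard argument behind that reference. (A tiny shortcut you could note: by the cited lemma, $b^{-1}(a^{-1}A)=\emptyset$ whenever $b\notin\phi_S(a)$, so the set $\{b\in S: b^{-1}(a^{-1}A)\in r\}$ is already contained in $\phi_S(a)$ and the adequacy step is not even needed there.)
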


\begin{proof}
\cite[Lemma 2.9]{HM01}.
\end{proof}
\begin{defn}
Let $p=p \ast p \in \delta S$ and let $A \in p$. Then $A^{\ast} = \{x \in A \mid x^{-1}A \in p\}$.
\end{defn}

Given an idempotent $p \in \delta S$ and $A \in p$, it is immediate that $A^{\ast} \in p$.
\begin{lem}\label{1.15}
Let $p= p \ast p \in \delta S$, let $A \in p$, and let $x \in A^{\ast}$. Then $x^{-1}A^{\ast} \in p$.
\end{lem}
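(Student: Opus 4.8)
The plan is to reduce the whole statement to the single set identity $x^{-1}A^{\ast}=(x^{-1}A)^{\ast}$ and then to quote the remark preceding the statement, which asserts that an idempotent $p$ satisfies $B^{\ast}\in p$ whenever $B\in p$. Since $x\in A^{\ast}$ gives $x^{-1}A\in p$ by the very definition of $A^{\ast}$, applying that remark with $B=x^{-1}A$ yields $(x^{-1}A)^{\ast}\in p$, and the identity immediately turns this into the desired $x^{-1}A^{\ast}\in p$. Thus the entire argument hinges on establishing that one equality of sets.

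To prove the identity I would simply unfold both sides from the definitions. By definition $x^{-1}A^{\ast}=\{y\in\phi_S(x)\mid x\ast y\in A^{\ast}\}$, and membership $x\ast y\in A^{\ast}$ means $x\ast y\in A$ together with $(x\ast y)^{-1}A\in p$. The first requirement, ranging over $y\in\phi_S(x)$, describes exactly the set $x^{-1}A$. For the second requirement I would invoke the earlier lemma characterizing $c\in b^{-1}(a^{-1}A)$: since here $y\in\phi_S(x)$, that lemma gives $(x\ast y)^{-1}A=y^{-1}(x^{-1}A)$. Substituting, one obtains $x^{-1}A^{\ast}=\{y\in x^{-1}A\mid y^{-1}(x^{-1}A)\in p\}$, which is precisely $(x^{-1}A)^{\ast}$.

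The only delicate point, and the step I expect to require the most care, is the domain bookkeeping forced by the partiality of $\ast$. One must check that the condition $y\in\phi_S(x)$ implicit in $x^{-1}A^{\ast}$ matches the hypothesis $y\in\phi_S(x)$ needed to apply the earlier lemma, and that $x^{-1}A\subseteq\phi_S(x)$, so that every $y\in x^{-1}A$ indeed lies in $\phi_S(x)$; both facts are immediate from the definition $x^{-1}A=\{t\in\phi_S(x)\mid x\ast t\in A\}$. Once these inclusions are noted the two descriptions coincide on the nose, and no genuine obstacle remains: the result is the exact partial-semigroup analogue of the classical fact, with the earlier lemma supplying the substitute for the associativity identity $(x\ast y)^{-1}A=y^{-1}(x^{-1}A)$ that one uses freely in a full semigroup.
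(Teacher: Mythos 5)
Your argument is correct. The paper itself gives no proof of this lemma, deferring to \cite[Lemma 2.12]{HM01}, and your reduction is exactly the standard one: the identity $x^{-1}A^{\ast}=(x^{-1}A)^{\ast}$ follows from the quoted lemma $y^{-1}(x^{-1}A)=(x\ast y)^{-1}A$ for $y\in\phi_S(x)$, and then the remark that $B^{\ast}\in p$ for every $B\in p$ applies to $B=x^{-1}A\in p$. (In fact only the inclusion $(x^{-1}A)^{\ast}\subseteq x^{-1}A^{\ast}$ is needed, but proving the full equality costs nothing and your domain bookkeeping for the partial operation is handled correctly.)
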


\begin{proof}
\cite[Lemma 2.12]{HM01}.
\end{proof}

As a consequence of the above results, we have that if $(S, \ast)$ is an adequate 
partial semigroup, then $(\delta S, \ast)$ is a compact right topological semigroup. 
Being a compact right topological semigroup, $\delta S$ contains
idempotents, left ideals, a smallest two-sided ideal, and minimal idempotents. 
Thus $\delta S$ provides a suitable environment for considering the notion of central sets and it defines as

\begin{defn} Let $(S,*)$ be an adequate partial semigroup. A set 
$C\subseteq S$ is {\it central\/} if and only if there is an idempotent
$p\in\overline C\cap K(\delta S)$.\end{defn}

In the Central sets Theorem for semigroup, we have studied that a necessary condition for the central sets along the line of sequences. Here, in the partial semigroup setting we can expect the similar thing for an adequate sequences instead of the normal one.

The notion of adequate sequence plays an important role in the study of central sets theorem  for the partial semigroups. So, we recall the definition as 

\begin{defn}
Let $(S,\ast)$ be an adequate partial semigroup and let\break
 $\left\langle y_{n}\right\rangle _{n=1}^{\infty}$
be a sequence in $S$. Then $\left\langle y_{n}\right\rangle _{n=1}^{\infty}$
is adequate if and only if $\prod_{n\in F}y_{n}$ is defined for each
$F\in\mathcal{P}_{f}(\mathbb{N})$ and for every $K\in\mathcal{P}_{f}(S)$,
there exists $m\in\mathbb{N}$ such that $FP(\left\langle y_{n}\right\rangle _{n=m}^{\infty})\subseteq\sigma_S(K)$.
\end{defn}

\begin{defn}
Let $W_1, W_2 \in \mathcal{P}_f(S)$, then define $W_1 \ast W_2=
\{w_1 \ast w_2 \mid w_1 \in W_1,  w_2 \in W_2 \; \text{and} \; w_1 \ast w_2 \;  \text{is defined}\}$.
\end{defn}

The sets which satisfy the new version of Central Sets Theorem \ref{adeprod} in adequate partial semigroup are said to be $C$-sets.

\section{Properties of $J_{\delta}$-sets}
This section concerns with the close look up on the $J_{\delta}$-sets and comes with some essential properties. We start with two essential lemmas.
\begin{lem} \label{prod ade}
Let $(S, \ast)$ be an adequate partial semigroup. Let $f$ be an adequate sequence in $S$ and let $\langle H_n \rangle_{n=1}^{\infty}$ be a sequence in $\mathcal{P}_f(\mathbb{N})$ such that $\max H_n < \min H_{n+1}$ for each $n \in \mathbb{N}.$ Define $g :\mathbb{N} \rightarrow S$ such that for each $n \in \mathbb{N}$, $g(n)= \prod_{t \in H_n} f(t).$ Then $g$ is an adequate sequence in $S.$
\end{lem}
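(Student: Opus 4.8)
The plan is to verify directly the two defining conditions of an adequate sequence for $g$, reducing each to the corresponding fact about $f$ through the block-collapse identity $\prod_{n\in F} g(n) = \prod_{t\in\bigcup_{n\in F}H_n} f(t)$.

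First I would establish this collapse identity. Fix $F\in\mathcal{P}_f(\mathbb{N})$ and write $F=\{n_1<n_2<\cdots<n_k\}$. The hypothesis $\max H_n<\min H_{n+1}$ guarantees that the blocks $H_{n_1},\ldots,H_{n_k}$ are pairwise disjoint and occur in increasing order, so that $\bigcup_{j}H_{n_j}$, listed in increasing order, is precisely the concatenation of the blocks $H_{n_j}$. Since $f$ is adequate, $\prod_{t\in E}f(t)$ is defined for every $E\in\mathcal{P}_f(\mathbb{N})$; in particular each factor $g(n_j)=\prod_{t\in H_{n_j}}f(t)$ is defined, and so is $\prod_{t\in\bigcup_j H_{n_j}}f(t)$. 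The partial-semigroup associativity axiom then lets me regroup this product block by block, which shows both that $\prod_{n\in F}g(n)$ is defined and that it equals $\prod_{t\in\bigcup_{n\in F}H_n}f(t)$. This already yields condition (1) in the definition of an adequate sequence.

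For condition (2), fix $K\in\mathcal{P}_f(S)$. Since $f$ is adequate, there is $m_0\in\mathbb{N}$ with $FP(\langle f(n)\rangle_{n=m_0}^{\infty})\subseteq\sigma_S(K)$. The condition $\max H_n<\min H_{n+1}$ forces $\langle\min H_n\rangle_{n=1}^{\infty}$ to be strictly increasing, so I can choose $m$ with $\min H_m\geq m_0$, whence $\min H_n\geq m_0$ for every $n\geq m$. Now take any $F\in\mathcal{P}_f(\mathbb{N})$ with $\min F\geq m$. By the collapse identity, $\prod_{n\in F}g(n)=\prod_{t\in\bigcup_{n\in F}H_n}f(t)$, and $\min\bigl(\bigcup_{n\in F}H_n\bigr)=\min H_{\min F}\geq m_0$, so this product belongs to $FP(\langle f(n)\rangle_{n=m_0}^{\infty})\subseteq\sigma_S(K)$. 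Hence $FP(\langle g(n)\rangle_{n=m}^{\infty})\subseteq\sigma_S(K)$, giving condition (2).

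The only delicate point, and hence the main obstacle, is justifying the block-collapse identity in the \emph{partial} semigroup setting, where a product is meaningful only once it is known to be defined. I expect to handle this by induction on $|F|$, invoking the partial-semigroup associativity axiom (if either side of $(x\ast y)\ast z=x\ast(y\ast z)$ is defined, so is the other and they agree) and leaning on the adequacy of $f$ to supply definedness of all the relevant sub-products. Once this identity is in place, both conditions follow as routine translations of the adequacy of $f$ into that of $g$.
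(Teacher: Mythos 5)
Your proposal is correct and follows essentially the same route as the paper: establish the block-collapse identity $\prod_{n\in F}g(n)=\prod_{t\in\bigcup_{n\in F}H_n}f(t)$ via associativity and the separation of the $H_n$, then translate both adequacy conditions for $g$ back to those for $f$. If anything, you are more careful than the paper on the second condition, since you distinguish the threshold $m_0$ coming from the adequacy of $f$ from the index $m$ chosen so that $\min H_m\geq m_0$, whereas the paper's write-up conflates the two.
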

\begin{proof}
To see that  $g$ is an adequate sequence, let $G \in \mathcal{P}_f(\mathbb{N})$ and let $H= \bigcup_{n \in G}H_n.$ Then $H \in \mathcal{P}_f(\mathbb{N}).$ Since $f$ is adequate then $\prod_{t \in H} f(t)$ is defined. Now $\prod_{n \in G}g(n)= \prod_{n \in G} \prod_{t \in H_n} f(t)= \prod_{t \in H} f(t),$ then $\prod_{n \in G} g(n)$ is defined. Let $K \in \mathcal{P}_f(S)$ be given. Then there exist $m \in \mathbb{N}$ such that $FP(\langle f(t) \rangle_{t=m}^{\infty}) \subseteq \sigma_S (K).$ Now we want to show that for $K \in \mathcal{P}_f(S),$ there exists $m \in \mathbb{N}$ such that $FP(\langle g(t) \rangle_{t=m}^{\infty}) \subseteq \sigma_S (K).$ Let $N \in \mathcal{P}_f(\{n \in \mathbb{N} : n \geqslant m\}).$ Then for each $n \in N$, $\min H_m \leqslant \min H_n.$ Let $H^{\prime}= \bigcup_{n \in N}H_n.$ Then $H^{\prime} \in \mathcal{P}_f(\{n \in \mathbb{N} : n \geqslant m\})$ and hence $\prod_{t \in H^{\prime}}f(t) \subseteq \sigma_S(K).$ Now $\prod_{n \in N}g(n)= \prod_{n \in N}(\prod_{t \in H_n}f(t))= \prod_{t \in H^{\prime}}f(t) \in \sigma_S(K).$ Therefore, $g$ is an adequate sequence.
\end{proof}

\begin{lem}\label{J prod}

Let $S$ be an adequate commutative partial semigroup, let $A$ be a $J_{\delta}$-set in $S.$ Let $F \in \mathcal{P}_f(\mathcal{T}_S),$ and let $\langle H_n \rangle_{n=1}^{\infty}$ be a sequence in $\mathcal{P}_f(\mathbb{N})$ such that for each $n$, $\max H_n < \min H_{n+1}.$ Then for all $W \in \mathcal{P}_f(S),$ there exist $a \in \sigma_S (W)$ and $G \in \mathcal{P}_f(\mathbb{N})$ such that for all $f \in F$, $\prod_{k \in G} \prod_{t \in H_k} \in \sigma_S(W \ast a)$ and $a \ast \prod_{k \in G} \prod_{t \in H_k} \in A. $

\end{lem}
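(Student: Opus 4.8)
The plan is to reduce the statement to a single application of the definition of a $J_{\delta}$-set, using Lemma \ref{prod ade} to manufacture the adequate sequences that feed into that definition. The guiding observation is that the doubly indexed product $\prod_{k \in G}\prod_{t \in H_k} f(t)$ is just a single product $\prod_{k \in G} g_f(k)$ of a new sequence $g_f$ obtained by blocking $f$ along the sets $H_n$.

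First I would fix, for each $f \in F$, the sequence $g_f \colon \mathbb{N} \to S$ defined by $g_f(n) = \prod_{t \in H_n} f(t)$. Since $f$ is adequate and the sequence $\langle H_n\rangle_{n=1}^{\infty}$ satisfies $\max H_n < \min H_{n+1}$ for each $n$, Lemma \ref{prod ade} guarantees that each $g_f$ is again an adequate sequence in $S$, that is, $g_f \in \mathcal{T}_S$. Collecting these, I set $F' = \{g_f : f \in F\}$, a finite subset of $\mathcal{T}_S$, so that $F' \in \mathcal{P}_f(\mathcal{T}_S)$.

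Next I would invoke the hypothesis that $A$ is a $J_{\delta}$-set. Applying Definition \ref{cset}(b) to $F'$ and the given $W \in \mathcal{P}_f(S)$, there exist $a \in \sigma_S(W)$ and $G \in \mathcal{P}_f(\mathbb{N})$ such that for every $g \in F'$ one has $\prod_{k \in G} g(k) \in \sigma_S(W \ast a)$ and $a \ast \prod_{k \in G} g(k) \in A$. It then remains only to rewrite these conclusions in terms of the original $f$: for each $f \in F$, taking $g = g_f$ gives $\prod_{k \in G} g_f(k) = \prod_{k \in G}\prod_{t \in H_k} f(t)$, so the two membership statements become precisely $\prod_{k \in G}\prod_{t \in H_k} f(t) \in \sigma_S(W \ast a)$ and $a \ast \prod_{k \in G}\prod_{t \in H_k} f(t) \in A$, which is the assertion of the lemma.

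Since the substantive work, namely that the blocked sequences $g_f$ remain adequate, is already carried out in Lemma \ref{prod ade}, there is no serious obstacle here. The only point deserving a little care is confirming that commutativity and the associativity of the partial operation permit the identification of $\prod_{k \in G} g_f(k)$ with the iterated product $\prod_{k \in G}\prod_{t \in H_k} f(t)$, together with the fact that all of these products are defined; this last point again follows from the adequacy of $f$, because $\bigcup_{k \in G} H_k$ is a finite subset of $\mathbb{N}$.
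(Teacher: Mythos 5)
Your proposal is correct and matches the paper's own proof essentially verbatim: both define $g_f(k)=\prod_{t\in H_k}f(t)$, invoke Lemma \ref{prod ade} to see each $g_f$ is adequate, and then apply the $J_{\delta}$-set property of $A$ to the finite family $\{g_f : f\in F\}$ and $W$. Your version is, if anything, slightly more careful in explicitly forming the set $F'\in\mathcal{P}_f(\mathcal{T}_S)$ before applying the definition.
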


\begin{proof}
For $f \in F,$ define $g_f : \mathbb{N} \rightarrow S$ by $g_f(k)=\prod_{t\in H_k}f(t).$ By Lemma \ref{prod ade}, $g_f$ is an adequate sequence in $S.$ Now since $A$ is a $J_{\delta}$-set, then  for $W \in \mathcal{P}_f(S)$ there exist $a \in \sigma_S(W)$ and $G \in \mathcal{P}_f(\mathbb{N})$ such that for each, $f \in F$, $\prod_{k \in G}g_f(k) \in \sigma_S(W \ast a)$ and $a \ast \prod_{k \in G}g_f(k) \in A.$ These imply $\prod_{k \in G} \prod_{t \in H_k} \in \sigma_S(W \ast a)$ and $a \ast \prod_{k \in G} \prod_{t \in H_k} \in A. $

\end{proof}

\begin{thm}\label{partition r}
Let $S$ be an adequate commutative partial semigroup. Let $A$ be a $J_{\delta}$-set in $S,$ and assume that $A=A_1 \cup A_2.$ Either $A_1$ is a $J_{\delta}$-set in $S$ or $A_2$ is a $J_{\delta}$-set in $S.$ 

\end{thm}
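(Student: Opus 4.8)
The plan is to prove the statement in its contrapositive form: assuming that $A_1$ is \emph{not} a $J_\delta$-set, I would show that $A_2$ must be one. Since $A_1$ fails to be a $J_\delta$-set, the first step is to fix a single witness to this failure, namely $F_1 \in \mathcal{P}_f(\mathcal{T}_S)$ and $W_1 \in \mathcal{P}_f(S)$ with the property that for every $a \in \sigma_S(W_1)$ and every $H \in \mathcal{P}_f(\mathbb{N})$ there is at least one $f \in F_1$ for which either $\prod_{t \in H} f(t) \notin \sigma_S(W_1 \ast a)$ or $a \ast \prod_{t \in H} f(t) \notin A_1$. The goal is then: given an arbitrary $F \in \mathcal{P}_f(\mathcal{T}_S)$ and $W \in \mathcal{P}_f(S)$, to produce $a \in \sigma_S(W)$ and $H \in \mathcal{P}_f(\mathbb{N})$ witnessing that $F$ and $W$ are good for $A_2$.

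The core of the argument is a finite induction on $|F_1|$, the size of this fixed witnessing family. I would enlarge the target data by setting $\widehat{F} = F \cup F_1$ and $\widehat{W} = W \cup W_1$, choose an increasing sequence of blocks $\langle H_n\rangle_{n=1}^{\infty}$ in $\mathcal{P}_f(\mathbb{N})$ with $\max H_n < \min H_{n+1}$, and apply Lemma \ref{J prod} to the $J_\delta$-set $A = A_1 \cup A_2$ with the family $\widehat{F}$ and the set $\widehat{W}$. This yields $a \in \sigma_S(\widehat{W})$ and $G \in \mathcal{P}_f(\mathbb{N})$ such that, writing $K = \bigcup_{k \in G} H_k$, every $f \in \widehat{F}$ satisfies $\prod_{t \in K} f(t) \in \sigma_S(\widehat{W} \ast a)$ and $a \ast \prod_{t \in K} f(t) \in A$. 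Each of these products lands in $A_1$ or in $A_2$, and the failure property of $F_1$ guarantees that they cannot all lie in $A_1$ over the $F_1$-part; hence some $f \in F_1$ is forced into $A_2$. Using commutativity to reassociate the block product, I would peel this element off and reapply the hypothesis to the strictly smaller witnessing family $F_1 \setminus \{f\}$, keeping $F$ attached throughout. After at most $|F_1|$ such reductions the $A_1$ option is eliminated and the $J_\delta$-property of $A$ delivers a witness that serves all of $F$ inside $A_2$.

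Throughout, the partial-semigroup constraints must be carried along: every manufactured base point $a$ must lie in the appropriate $\sigma_S$, and every block product must lie in $\sigma_S(\widehat{W} \ast a)$ so that the subsequent concatenations are defined. This bookkeeping is exactly what the two preceding lemmas are designed to supply: Lemma \ref{prod ade} ensures that the coarsened block-products of an adequate sequence are again adequate, so the products over the growing index sets remain defined, while Lemma \ref{J prod} already packages the required $\sigma_S(\widehat{W} \ast a)$-membership for block-products rather than for raw terms.

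The main obstacle is not the two-coloring idea itself but the simultaneous bookkeeping across the induction: each peeling step must preserve at once (i) membership of the running product in $A$, (ii) all the $\sigma_S$-definedness conditions needed to form the next concatenation, and (iii) the ordering and disjointness of the successive block index sets, so that the commutative reassociation used to split off the $A_2$-element is legitimate. Formulating the inductive statement so that it accumulates a genuine $A_2$-witness for all of $F$ at once—rather than merely controlling one sequence at a time—while keeping the adequacy of every sequence involved (via Lemma \ref{prod ade}) is where the real work lies.
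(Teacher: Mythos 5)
Your proposal has a genuine gap: it is missing the combinatorial engine that makes this theorem true, namely the Hales--Jewett theorem. The paper's proof lists $F=F_1\cup F_2=\{f_1,\dots,f_p\}$, forms for each word $w=b_1\cdots b_n$ over $\{1,\dots,p\}$ the adequate sequence $g_w(y)=\prod_{1\le i\le n}f_{b_i}(ny+i)$, applies the $J_\delta$-property of $A$ to the family of all these $g_w$, and then invokes \cite[Lemma 14.8.1]{HS12} (a Hales--Jewett consequence) to extract a variable word $w(v)$ whose instances $w(1),\dots,w(p)$ are monochromatic for the coloring ``does $a\ast\prod_{y\in H}g_w(y)$ land in $A_1$?''. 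Factoring the constant part of $w(v)$ out as a single element $d$ and collecting the variable positions into $K$ produces one pair $(d,K)$ for which \emph{all} the products $d\ast\prod_{t\in K}f_l(t)$ lie in the \emph{same} cell, which is exactly what contradicts the chosen witness. Your argument never produces such a synchronizing device: applying Lemma \ref{J prod} to $\widehat F=F\cup F_1$ only tells you that each product lands somewhere in $A_1\cup A_2$, and the failure of $A_1$ only forces \emph{one} $f\in F_1$ into $A_2$ for that particular $(a,K)$; nothing forces the products of the members of $F$ into a common cell, which is the entire difficulty.

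The proposed induction on $|F_1|$ also does not get off the ground. The failure of $A_1$ is witnessed by the pair $(F_1,W_1)$ as a whole: for every $(a,H)$ \emph{some} member of $F_1$ fails, with no control over which one. Removing the member that happened to fail for your current $(a,K)$ leaves a family $F_1\setminus\{f\}$ that need not witness anything, so ``reapplying the hypothesis to the strictly smaller witnessing family'' is not a legitimate step. Moreover each reapplication of Lemma \ref{J prod} yields a fresh pair $(a,G)$, so the facts accumulated at different stages concern different base points and index sets and cannot be merged into the single witness the definition of a $J_\delta$-set demands. You allude to ``the two-coloring idea'' as if it were already in hand, but a two-coloring by itself does nothing; it is the Hales--Jewett extraction of a monochromatic combinatorial line from that coloring that carries the proof, and that step is absent from your outline.
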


\begin{proof}
Suppose the conclusion of the statement of the theorem is false. That is both $A_1$ and $A_2$ are not $J_\delta$-sets. Then, we can pick $F_1, F_2 \in \mathcal{P}_f(\mathcal{T}_S)$ and $W_1, W_2 \in \mathcal{P}_f(S)$ such that for all $i \in \{1,2\}$ and for all $d \in \sigma_S(W_i)$ and $K \in \mathcal{P}_f(\mathbb{N})$ such that there exist $f \in F_i$ such that $d \ast \prod_{t\in K}f(t) \notin A_i \cap \sigma_S(W_i).$ Now, set $W = W_1 \cup W_2$ and $F = F_1 \cup F_2.$ Assume that $F = \{f_1, \cdots, f_p \}.$ 

Next, consider $B = \{w \mid w \; \text{is a word of length}\; n \; \text{over} \; \{ 1, \cdots,p\} \}.$ For each $w = b_1 \cdots b_n \in B$ define a function $g_w : \mathbb{N} \to S$ given by $$g_w(y) = \prod_{1 \leq i \leq n} f_{b_i}(ny +i)$$ where $y \in \mathbb{N}.$ Then, Lemma \ref{prod ade} guarantees that for each $w \in B$, $g_w$ is an adequate sequence. Define $G = \{g_w \mid w \in B \} \subseteq \mathcal{P}_f(\mathcal{T}_S).$ Since, $A$ is a $J_{\delta}$-set then for this $G$ and the set $W$ chosen earlier we get $a \in \sigma_S(W)$ and $H \in \mathcal{P}_f(\mathbb{N})$ such that for each $w \in B$ $$a \ast\prod_{y \in H} g_w(y) \in A \cap \sigma_S(W).$$ 

By \cite[Lemma 14.8.1]{HS12}, we can pick $n \in \mathbb{N}$ such that whenever the set $B$ is 2-colored there is a variable word $w(v)$ beginning and ending with a constant and without successive occurrences of $v$ such that $\{w(l): l \in \{ 1, \cdots, p\} \}$ is monochromatic.
 
 Define a $2$-coloring $\phi : B \to \{ 1,2\}$ on the set $B$ given by $$\phi(w) =\begin{cases} 1, & \text{if} \; a \ast\prod_{y \in H} g_w(y) \in A_1 \cap \sigma_S(W) \\ 2, & \text{otherwise} \end{cases}.$$ So, we can pick a variable word $w(v)$ beginning and ending with a constant and without successive occurrences of $v$ such that $\{w(l): l \in \{ 1, \cdots, p\} \}$ is monochromatic. Without loss of generality, we assume that $\phi(w(l)) =1.$ In other words, we assume that for each $l \in \{1, \cdots, p\}$, 
 \begin{equation}\label{par}
 a \ast \prod_{y \in H}g_{w(l)}(y) \in A_1 \cap \sigma_S(W).
 \end{equation}
 
 Suppose $w(v) = b_1\cdots b_n$ and $r$ is the total number of occurrence of $v$ in the word $w(v).$ Then we can write down the set $\{j \in \{1, \cdots, n\} \mid b_j \in \{ 1, \cdots, p\} \} = \bigcup_{i =1}^{r+1}L(i)$ and $\{j \in \{1, \cdots, n\}\mid b_j = v\} = \{s(1), \cdots, s(r) \}$ for some function $s : \{ 1, \cdots, r\} \to \mathbb{N}$ such that $max L(x) < s(x) < min L(x+1)$. Precisely, the function $s$ refers the position function for $v$ in $w(v).$ The construction of $L(i)$'s can be understand by the following example. Let $w(v) = 2v31v12.$ Then $r =2$, $L(1) =\{1\},$ $L(2) =\{ 3,4\},$ and $L(3) = \{ 6,7\}.$ So, $L(i)$ is the set of $j \in \{  1, \cdots, n\}$ such $b_j \in \{1, \cdots, p \}$ and $b_j$'s are in between $(i-1)^{th}$ and $i^{th}$ occurrences of $v$ in $w(v).$
 Next, we try to understand the term $a \ast \prod_{y \in H}g_{w(l)}(y).$ Using the previous discussion so far we have
 \begin{align*}
 a \ast \prod_{y \in H}g_{w(l)}(y)& = a \ast \prod_{y \in H} \prod_{1 \leq i \leq n}f_{b_i}(ny +i) \\ & =a \ast \prod_{y \in H}\prod_{k=1}^{r+1} \prod_{i \in L(k)} f_{b_i}(ny+i) \ast \prod_{y \in H} \prod_{i=1}^{r} f_l(ny + s(i)) \\ & =d \ast \prod_{t \in K}f_l(t) 
 \end{align*}
 
 Where $$d = a \ast \prod_{y \in H}\prod_{k=1}^{r+1} \prod_{i \in L(k)} f_{b_i}(ny+i)\; \text{and}\; K = \bigcup_{i=1}^r \{ny + s(i)\mid y \in H \}$$. Then definitely $K$ is a finite subset of $\mathbb{N}.$ Also, \eqref{par} tells us $W \ast a \ast \prod_{y \in H}g_{w(l)}(y)$ is defined i.e. $W \ast d \ast \prod_{t \in K}f_l(t)$ is defined. Using associativity, this implies $W \ast d$ is defined. Hence $d \in \sigma_S(W)$ and $d \ast \prod_{t\in K}f_l(t) \in A_1 \cap \sigma_S(W) \subset A_1 \cap \sigma_S(W_1).$ This a contradiction to our assumption.
 \end{proof}
 \begin{thm}\label{ideal}
Let $(S, \ast)$ be an adequate commutative partial semigroup. Then $J_{\delta }(S) $ is a closed two sided ideal of $\delta S.$
\end{thm}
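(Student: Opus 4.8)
The plan is to establish, in turn, that $J_{\delta}(S)$ is nonempty, that it is closed, and that it is absorbed by $\delta S$ under multiplication on either side. Write $\mathcal{J}$ for the collection of all $J_{\delta}$-sets. Two facts about $\mathcal{J}$ will be used throughout. First, $\mathcal{J}$ is upward closed (immediate from Definition \ref{cset}(b)) and partition regular (Theorem \ref{partition r} together with a routine induction), so every finite cover of a $J_{\delta}$-set contains a $J_{\delta}$-set. Second, every $J_{\delta}$-set $A$ meets $\sigma_{S}(W)$ for every $W\in\mathcal{P}_{f}(S)$: applying Definition \ref{cset}(b) to a single adequate sequence and to $W$ gives $a\in\sigma_{S}(W)$ and $H$ with $\prod_{t\in H}f(t)\in\sigma_{S}(W\ast a)$ and $a\ast\prod_{t\in H}f(t)\in A$, and associativity then shows $a\ast\prod_{t\in H}f(t)\in\sigma_{S}(W)$. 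In particular $S$ itself is a $J_{\delta}$-set, as one sees by picking $a\in\sigma_{S}(W)$ and, using adequacy of the finitely many sequences of a given $F$, a large $t$ with $f(t)\in\sigma_{S}(W\ast a)\cap\phi_{S}(a)$ for all $f\in F$; while $S\setminus\sigma_{S}(W)\notin\mathcal{J}$ for every $W$.

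For closedness and nonemptiness, I would observe that $p$ belongs to $P=\{p\in\beta S:\text{every }A\in p\text{ is a }J_{\delta}\text{-set}\}$ exactly when $p$ contains no non-$J_{\delta}$-set, so $\beta S\setminus P=\bigcup_{A\notin\mathcal{J}}\overline{A}$ is open and $P=\bigcap_{A\notin\mathcal{J}}\overline{S\setminus A}$ is closed. Every $p\in P$ lies in $\delta S$: otherwise $\sigma_{S}(W)\notin p$ for some $W$, forcing the non-$J_{\delta}$-set $S\setminus\sigma_{S}(W)$ into $p$, which is impossible. Hence $P=J_{\delta}(S)$, and $J_{\delta}(S)$ is closed. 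For nonemptiness I argue by compactness: if $P=\emptyset$ then the clopen sets $\overline{A}$ with $A\notin\mathcal{J}$ cover $\beta S$, so finitely many do, say $\overline{A_{1}}\cup\cdots\cup\overline{A_{n}}=\beta S=\overline{S}$; then $A_{1}\cup\cdots\cup A_{n}=S$, and since $S\in\mathcal{J}$, partition regularity yields some $A_{i}\in\mathcal{J}$, contradicting $A_{i}\notin\mathcal{J}$.

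For the ideal property, fix $p\in J_{\delta}(S)$ and $q\in\delta S$; both $q\ast p$ and $p\ast q$ lie in $\delta S$ since $\delta S$ is a subsemigroup of $(\beta S,\ast)$, so it remains to see that every member of each product is a $J_{\delta}$-set. For the left inclusion I would take $A\in q\ast p$, so $\{a:a^{-1}A\in p\}\in q$; given $F$ and $W$, use $q\in\delta S$ to choose $a$ in this set with $a\in\sigma_{S}(W)$, note that $a^{-1}A\in p$ is a $J_{\delta}$-set, apply Definition \ref{cset}(b) to $a^{-1}A$ with $F$ and $(W\ast a)\cup\{a\}$ to obtain $b$ and $H$, and set $c=a\ast b$; associativity then gives $c\in\sigma_{S}(W)$, $\prod_{t\in H}f(t)\in\sigma_{S}(W\ast c)$, and $c\ast\prod_{t\in H}f(t)\in A$ for all $f\in F$. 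For the right inclusion I would take $A\in p\ast q$, so $C=\{a:a^{-1}A\in q\}\in p$ is a $J_{\delta}$-set; given $F$ and $W$, Definition \ref{cset}(b) applied to $C$ yields $a\in\sigma_{S}(W)$, a block $H_{0}$, and, writing $u_{f}=\prod_{t\in H_{0}}f(t)$, the memberships $a\ast u_{f}\in C$ and $u_{f}\in\sigma_{S}(W\ast a)$ for all $f\in F$. Then $(a\ast u_{f})^{-1}A\in q$ for each $f$, and since $q\in\delta S$ the set $\bigcap_{f\in F}(a\ast u_{f})^{-1}A\cap\sigma_{S}(E)$, with $E=\{(w\ast a)\ast u_{f}:w\in W,\ f\in F\}$, is again in $q$, hence nonempty; choosing $z$ in it and setting $c=a\ast z$, commutativity rewrites $c\ast u_{f}=(a\ast u_{f})\ast z\in A$, exhibiting $A\in\mathcal{J}$ with witnesses $c$ and $H_{0}$.

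The main obstacle is the right inclusion. In the left case the sets $a^{-1}A$ inherit $J_{\delta}$-structure from $p$ and the witness transfers almost directly, but in the right case membership in $q$ carries no such structure, so the witness must be assembled by hand: I move the ``$q$-part'' $z$ past each product $u_{f}$ using commutativity and select $z$ inside the single member $\bigcap_{f\in F}(a\ast u_{f})^{-1}A\cap\sigma_{S}(E)$ of $q$. What makes the bookkeeping go through is the elementary fact that in a commutative partial semigroup the definedness of one product of finitely many elements entails the definedness of all its sub-products and rearrangements; this lets one conclude simultaneously that $c\in\sigma_{S}(W)$, that $\prod_{t\in H_{0}}f(t)\in\sigma_{S}(W\ast c)$, and that $c\ast\prod_{t\in H_{0}}f(t)\in A$. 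Verifying these definedness conditions, rather than any deeper idea, is the delicate part of the argument.
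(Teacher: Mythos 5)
Your proposal is correct and follows essentially the same route as the paper: partition regularity of $J_{\delta}$-sets yields nonemptiness and closedness, and the two-sided ideal property is established by the same two witness-transfer arguments (pulling the $J_{\delta}$-structure through $a^{-1}A$ when the $J_{\delta}$-ultrafilter sits on the right of the product, and intersecting the sets $(a\ast u_f)^{-1}A$ with a suitable $\sigma_S(E)$ inside the other factor when it sits on the left). Your write-up is in fact a little more careful than the paper's at two points --- you choose $a\in\sigma_S(W)$ using membership in $\delta S$ before forming $W\ast a$, and you verify explicitly that $S$ itself is a $J_{\delta}$-set so that the compactness argument for nonemptiness closes --- but these are refinements of, not departures from, the published proof.
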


\begin{proof}
Let $A$ be a $J_{\delta}$-set in $S.$ Then Theorem  \ref{partition r} yields the  $J_{\delta}$-sets are partition regular. Next, we claim that $J_{\delta}(S) \neq \emptyset$. To prove the claim, we consider the sets $\mathfrak{R} = \{ A \subseteq S \mid A \; \text{a is } \; J_{\delta}\text{-set}\}$ and  $\mathfrak{A} = \{ \phi_S(s) \mid s \in S\}.$ Observe that for $\sigma_S(F) = \cap_{s \in F}\phi_S(s)$ we always can find a $J_{\delta}$-set $A$ such that $A \subseteq \sigma_S(F).$ This gives that for any finite intersections of the members of $\mathfrak{A}$ lies inside $\mathfrak{R}^{\uparrow}.$ Therefore, using Theorem \cite[Theorem 3.11]{HS12} we get a ultrafilter $p$ of $S$ such that $\mathfrak{A} \subseteq p \subseteq \mathfrak{R}^{\uparrow}.$ The first inclusion implies that  for each $s \in S,$ $\phi_S(s) \in p$, therefore, $p \in \delta S.$ This forces $p \in J_{\delta}(S).$ Hence, the claim is established.  Next, we want to show that $J_{\delta}(S)$ is closed. Let $p \in \delta S \setminus J_{\delta}(S),$ then there exists $B \in p$ such that $B$ is not a $J_{\delta}$-set, therefore, $\overline{B} \cap J_{\delta}(S) = \emptyset ,$ i.e., $J_{\delta}(S) $ is closed.

We want to prove that $J_{\delta }(S)$ is a two sided ideal of $\delta S,$ i.e., for all $p \in \delta S$, $q \in J_{\delta }(S)$ imply that $ p \ast q \in J_{\delta}(S)$ and $q \ast p \in J_{\delta}(S).$

Let $A \in p \ast q.$ We want to show that $A$ is a $J_{\delta }$-set.

Since $A \in p \ast q,$ then $\{a \in S \mid a^{-1}A \in q\} \in p. $ Pick $a \in S$ such that $a^{-1}A \in q.$ Now $q \in J_{\delta }(S),$ then $a^{-1}A$ is a $J_{\delta}$-set. Therefore, for $F \in \mathcal{P}_f(\mathcal{T}_S)$ and $W \in \mathcal{P}_f(S),$ there exist $b \in \sigma_S(W \ast a)$ and $H \in \mathcal{P}_f(\mathbb{N})$ such that $\prod_{t \in H}f(t) \in \sigma_S (W \ast a \ast b)$ and $b \ast \prod_{t \in H}f(t) \in a^{-1}A= \{d \in \phi_S(a) \mid a \ast d \in A\}.$ This implies $a \ast b \ast \prod_{t \in H}f(t) \in A.$ Now define $c=a \ast b.$ Then for $F \in \mathcal{P}_f(\mathcal{T}_S)$ and $W \in \mathcal{P}_f(S),$ there exist $c \in \sigma_S(W)$ and $H \in \mathcal{P}_f(\mathbb{N})$ such that for each $f \in F$, $\prod_{t \in H}f(t) \in \sigma_S (W \ast c)$ and $c \ast \prod_{t \in H}f(t) \in A.$ Therefore, $A \in p \ast q.$

Now let $A \in q \ast p.$ We want to show that $A$ is a $J_{\delta}$-set.

Let $B=\{a \in S \mid a^{-1}A \in p \}.$ Since $A \in q \ast p,$ then $\{a \in S \mid a^{-1}A \in p \} \in q.$ This implies $B \in q.$ Now $q \in J_{\delta}(S),$ then $B$ is a $J_{\delta}$-set. Let $F \in \mathcal{P}_f(\mathcal{T}_S)$ and $F=\{f_1, f_2, \cdots, f_k\}$. Let $W_{0} \in \mathcal{P}_f(S),$ then there exist $b \in \sigma_S(W_{0})$  and $H \in \mathcal{P}_f(\mathbb{N})$ such that for each $f \in F$, $\prod_{t \in H}f(t) \in \sigma_S(W_{0} \ast b) $ and $b \ast \prod_{t \in H}f(t) \in B.$ Therefore, $(b \ast \prod_{t \in H}f(t))^{-1}A \in p$ for all $f \in F$ and this yields $\bigcap\limits_{f \in F}(b \ast \prod_{t \in H}f(t))^{-1}A \in p.$ Now since $p \in \delta S,$ then $\sigma_S(W) \in p$ for all $W \in \mathcal{P}_f(S).$ Therefore, $\sigma_S(W) \cap \bigcap\limits_{f \in F}(b \ast \prod_{t \in H}f(t))^{-1}A \in p$ for all $W \in \mathcal{P}_f(S).$ So it must be nonempty. Take $W= W_{0} \ast F_0,$ where $F_0=\{b \ast \prod_{t \in H}f_1(t),b \ast \prod_{t \in H}f_2(t), \cdots,b \ast \prod_{t \in H}f_k(t)\} $ and pick  $$c \in \sigma_S(W_0 \ast F_0) \cap \bigcap\limits_{f \in F}(b \ast \prod_{t \in H}f(t))^{-1}A.$$  Now choose $a=b \ast c.$ Then for each $f \in F$, $\prod_{t \in H}f(t) \in \sigma_S(W_0 \ast a)$ and $a \ast \prod_{t \in H}f(t) \in A.$ This suffices $A$ is a $J_{\delta}$-set.

\end{proof}

\begin{cor}
Let $(S, \ast )$ be an adequate commutative partial semigroup. Let $A \subseteq S.$ If $A$ is a central set in $S,$ then $A$ is a $C$-set in $S.$

\end{cor}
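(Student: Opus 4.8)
The plan is to observe that this corollary is an immediate consequence of the partial-semigroup Central Sets Theorem (Theorem \ref{adeprod}): the conclusion of that theorem is, word for word, the defining property of a $C$-set recorded in Definition \ref{cset}(a). So rather than construct anything new, I would simply transcribe one statement into the other.

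First I would unwind the hypothesis. If $A$ is central in $S$, then by definition there is an idempotent $p \in \overline{A} \cap K(\delta S)$, which is exactly the notion of centrality under which Theorem \ref{adeprod} is stated. Hence I may apply that theorem with $C = A$. Applying it yields functions $\alpha : \mathcal{P}_f(\mathcal{T}_S) \to S$ and $H : \mathcal{P}_f(\mathcal{T}_S) \to \mathcal{P}_f(\mathbb{N})$ such that (1) $F \subsetneq G$ implies $\max H(F) < \min H(G)$, and (2) for every chain $G_1 \subsetneq \cdots \subsetneq G_m$ in $\mathcal{P}_f(\mathcal{T}_S)$ and every selection $f_i \in G_i$, the product $\prod_{i=1}^m(\alpha(G_i) \ast \prod_{t \in H(G_i)} f_i(t))$ lies in $A$. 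These are precisely conditions (1) and (2) of Definition \ref{cset}(a), so the same pair $(\alpha, H)$ witnesses that $A$ is a $C$-set, and the argument is finished.

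The only point that genuinely needs checking is that the notion of \emph{central} invoked in Theorem \ref{adeprod} agrees with the one in the corollary's hypothesis; since both are phrased as the existence of an idempotent in $\overline{A} \cap K(\delta S)$, there is no gap. Consequently there is no real obstacle here: the corollary is a direct reformulation of the Central Sets Theorem into the language of $C$-sets, and I would keep the write-up to a single line citing Theorem \ref{adeprod} and Definition \ref{cset}(a).
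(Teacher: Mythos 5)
Your proof is correct, but it takes a genuinely different route from the paper's. You deduce the corollary directly from the combinatorial Central Sets Theorem for partial semigroups (Theorem \ref{adeprod}, i.e.\ Theorem 2.4 of \cite{AG18}): its conclusion is verbatim the defining property in Definition \ref{cset}(a), so the same pair $(\alpha,H)$ witnesses that $A$ is a $C$-set, and nothing further is needed. The paper instead argues algebraically: centrality gives an idempotent $p\in K(\delta S)\cap\overline{A}$; by Theorem \ref{ideal} the set $J_{\delta}(S)$ is a two-sided ideal of $\delta S$, hence $K(\delta S)\subseteq J_{\delta}(S)$, so $p$ is an idempotent in $\overline{A}\cap J_{\delta}(S)$; then the characterization of $C$-sets as members of idempotents in $J_{\delta}(S)$ (\cite[Theorem 3.4]{AG18}) finishes the proof. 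Your route is shorter and bypasses the new ideal result entirely, at the cost of leaning on the full strength of the Central Sets Theorem; the paper's route is placed as a corollary of Theorem \ref{ideal} precisely to exhibit the algebraic characterization in action, and that version of the argument is the one that generalizes to the later results of Section 5 (e.g.\ Theorem \ref{cset}), where only an idempotent in $\overline{A}\cap J_{\delta}(S)$ is available and no direct appeal to the Central Sets Theorem is possible. Both arguments are sound.
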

\begin{proof}
Since $A$ is a central set in $S,$ then there is an idempotent $p$ such that $p \in K(\delta S) \cap \bar{A}.$ By lemma \ref{ideal}, $J_{\delta}(S)$ is a two sided ideal of $\delta S. $ Then $K(\delta S) \subseteq J_{\delta }(S).$  Therefore, by \cite[Theorem 3.4]{AG18}, $A$ is a $C$-set in $S.$
\end{proof}

\section{Construction of $J_{\delta}$-sets}
This section deals with some technical construction of $J_{\delta}$-sets. We start with some definitions.

Let $\omega$ denotes the first infinite ordinal and each ordinal is the set of its predecessors. In particular, $[0]=\emptyset$ and for $n \in \mathbb{N}$, $[n]=\{0,1,\cdots, n-1\}.$
\begin{defn}
If $f$ is a function and domain$(f)=[n] \in \omega,$ then for all $x \in S$, $f^{\frown}x = f \cup \{ (n, x)\}$. Precisely, it means we extend the domain of $f$ to $[n+1]$ by defining $f(n)=x.$

\end{defn}

\begin{defn}
Let $T=\{f \mid f : [n] \rightarrow S \},$ i.e., $T$ is a set of functions whose domains are members of $\omega.$ For each $f \in T,$ define $B_f(T)=\{ x \mid f^{\frown}x \in T\}.$

Using these notions in hand we can stated the following
\end{defn}
\begin{lem}\label{words}
Let $S$ be an adequate partial semigroup and let $p \in \delta S.$ Then $p$ is an idempotent if and only if for each $A \in p$ there is a nonempty set $T$ of functions such that 
\begin{itemize}
\item[(a)] For all $f \in T,$ domain$(f)\in \omega$ and range$(f) \subseteq A.$
\item[(b)] For all $f \in T$, $B_f(T) \in p.$
\item[(c)] For all $f \in T$ and all $x \in B_f(T)$, $B_{f^{\frown}x}(T) \subseteq x^{-1}B_f(T).$
\end{itemize}

\end{lem}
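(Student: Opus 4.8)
The plan is to prove the two implications separately, with the real content in the forward direction (idempotency $\Rightarrow$ existence of the tree $T$); the converse is a short ultrafilter computation that uses only (a), (b), (c) together.

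For the converse, suppose that for each $A \in p$ such a set $T$ exists, and fix $A \in p$. Since two ultrafilters coincide as soon as one contains the other, and $p \ast p \in \delta S$ is again an ultrafilter, it suffices to show $A \in p \ast p$, i.e. $\{s \in S : s^{-1}A \in p\} \in p$. I would pick any $f \in T$ (possible since $T \neq \emptyset$); then $B_f(T) \in p$ by (b). Condition (a) forces $B_f(T) \subseteq A$, because every $x \in B_f(T)$ is the last value of $f^{\frown}x \in T$ and hence lies in range$(f^{\frown}x) \subseteq A$. Now for $x \in B_f(T)$ the function $f^{\frown}x$ is in $T$, so $B_{f^{\frown}x}(T) \in p$ by (b), while (c) gives $B_{f^{\frown}x}(T) \subseteq x^{-1}B_f(T) \subseteq x^{-1}A$; hence $x^{-1}A \in p$. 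Thus $B_f(T) \subseteq \{s : s^{-1}A \in p\}$, and the latter set is in $p$. As $A \in p$ was arbitrary, $p \subseteq p\ast p$ and therefore $p = p \ast p$.

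For the forward direction, assume $p = p \ast p$ and fix $A \in p$. Recall that $A^* \in p$ and that Lemma \ref{1.15} gives $x^{-1}A^* \in p$ whenever $x \in A^*$. I would take the FP-tree
$$T = \Big\{ f : [n] \to S \ \Big|\ n \in \omega \text{ and } \prod_{i \in F} f(i) \in A^* \text{ for every } \emptyset \neq F \subseteq [n] \Big\},$$
with products taken in increasing order of index, so that membership already presupposes the relevant products are defined. The empty function lies in $T$ vacuously, so $T \neq \emptyset$. Taking $F$ a singleton shows range$(f) \subseteq A^* \subseteq A$, giving (a). For $f$ with domain $[n]$ one computes
$$B_f(T) = A^* \cap \bigcap_{\emptyset \neq E \subseteq [n]} \Big(\prod_{i \in E} f(i)\Big)^{-1} A^*,$$
a finite intersection; each $\prod_{i \in E} f(i)$ lies in $A^*$, so Lemma \ref{1.15} puts each factor in $p$, whence $B_f(T) \in p$ and (b) holds.

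The main obstacle is condition (c). For $x \in B_f(T)$ and $y \in B_{f^{\frown}x}(T)$ I must show $x \ast y \in B_f(T)$ and $y \in \phi_S(x)$. Testing $y \in B_{f^{\frown}x}(T)$ against the subsets $F = E \cup \{n, n+1\}$ of the domain $[n+2]$ of $(f^{\frown}x)^{\frown}y$ yields $(\prod_{i \in E} f(i)) \ast x \ast y \in A^*$ for every $E \subseteq [n]$ (the case $E = \emptyset$ gives $x \ast y \in A^*$). Regrouping by associativity — which in a partial semigroup also certifies that $(\prod_{i \in E} f(i)) \ast (x \ast y)$ is defined and equal — this is exactly the condition $x \ast y \in (\prod_{i \in E} f(i))^{-1}A^*$ defining membership of $x \ast y$ in $B_f(T)$; definedness of $x \ast y$ simultaneously gives $y \in \phi_S(x)$. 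The delicate points are purely bookkeeping about which products are defined before rewriting, and this is precisely what the defining conditions of $T$ and the hypothesis $y \in B_{f^{\frown}x}(T)$ supply. Note that no commutativity is required, since every regrouping respects the index order.
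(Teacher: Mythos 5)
Your proof is correct. The converse direction (tree $\Rightarrow$ idempotent) is essentially the computation the paper gives, with one welcome addition: you justify the inclusion $B_f(T)\subseteq A$ (every $x\in B_f(T)$ lies in the range of $f^{\frown}x\in T$), which is what licenses the step $x^{-1}B_f(T)\subseteq x^{-1}A$ that the paper uses without comment. The forward direction is genuinely different. The paper builds $T$ level by level, setting $B_{\emptyset}=A^{\ast}$ and $B_{f^{\frown}x}=(x^{-1}B_f)^{\ast}$, so that (c) holds by construction and (b) follows inductively from Lemma \ref{1.15}; you instead take the explicit tree of all finite sequences whose ordered subproducts lie in $A^{\ast}$, identify $B_f(T)$ as the finite intersection $A^{\ast}\cap\bigcap_{\emptyset\neq E\subseteq[n]}\bigl(\prod_{i\in E}f(i)\bigr)^{-1}A^{\ast}$, and obtain (b) from Lemma \ref{1.15} and (c) from a single application of the three-term associativity axiom to the triple $\prod_{i\in E}f(i)$, $x$, $y$, which also handles the definedness bookkeeping. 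Your route buys something concrete: condition (a) is automatic, since singleton subproducts put the range inside $A^{\ast}\subseteq A$, whereas in the paper's recursion the set $(x^{-1}B_f)^{\ast}$ need not be contained in $A$, so the induction hypothesis on ranges is not actually secured there without an extra intersection with $A^{\ast}$. The only cost is that your $B_f(T)$ is a $2^{n}$-fold rather than a single starred set, which is harmless since the intersection is finite.
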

\begin{proof}
We claim that $p$ is an idempotent element in $\delta S,$ i.e., $\{x \in S \mid x^{-1}A \in p\} \in p.$ Now there is given $A \in p$ and $T$ which satisfies the above conditions. Pick $f \in T.$ Then $B_f(T) \in p.$ Therefore, if we will prove that $B_f(T) \subseteq  \{x \in S \mid x^{-1}A \in p\},$ then our claim will be proved. Let $x \in B_f(T),$ then $f^{\frown}x \in T. $ Therefore, by (b) $B_{f^{\frown}x}(T) \in p.$ Now by (c), $B_{f^{\frown}x}(T) \subseteq x^{-1}B_f(T) \subseteq x^{-1}A.$ Thus $x^{-1}A \in p,$ and so $p$ is an idempotent element.

Conversely, let $p$ be an idempotent element in  $\delta S$ and let $A \in p.$ For any $B \in p, $ let define $B^*=\{x \in B \mid x^{-1}B \in p\}.$ Then $B^* \in p$ and by \cite[Lemma 2.12]{HM01}, for $x \in B^*$, $x^{-1}B^* \in p.$ To prove the result we inductively construct a filtration of $T = \bigcup\limits_{[n] \in \omega} T_n$ where $T_n=\{f \in T \mid \text{domain}(f)=[n]\}$ and for each $f\in T_n,$ define $B_f=B_f(T).$ Note that $T_{\emptyset} = \{ 0\}$. Now set $B_{\emptyset} = A^*.$ 

Now let $[n] \in \omega$  and assume that we have defined $T_k$ for $k \leq n$ and defined $B_f$ for $f \in T_k$ such that 

\begin{itemize}
\item[(a)] $T_k $ is a set of functions with domain $[k] $ and range contained in $A.$
\item[(b)] If $f \in T_k$ and $x \in B_f,$ then $B_f \in p$ and $x^{-1}B_f \in p.$
\item[(c)] If $k < n$, $f \in T_k,$ and $x \in B_f,$ then $B_{f^{\frown}x}=(x^{-1}B_f)^*.$
 
\end{itemize}
If $n=0$, $T_0=\{\emptyset\}.$ Then (a) is trivially true.
For (b), $B_{\emptyset}=A^* \in p$ and if $x \in A^*,$ then $x^{-1}A^* \in p.$ Again (c) is vacuously true. 

Now, we will check the hypotheses for $T_{n+1}.$

$T_{n+1}=\{f \in T \mid \text{domain}(f)=[n+1] \}.$ So, for any $f \in T_{n+1}$ can be written as $f = {f|_{[n]}}^{\frown}x$ where $f(n)=x.$ So, we can write $T_{n+1}=\{f^{\frown} x \mid f \in T_n \; \text{and} \; x \in B_f\}.$ Now let $g \in T_{n+1},$ then we can take $g=f^{\frown}x$ for $f \in T_n$ and $x=g(n).$ Now by (b), for $f \in T_n$ and $x \in B_f$, $x^{-1}B_f \in p.$ Let $B_g=(x^{-1}B_f)^*.$ Now let $y \in B_g= (x^{-1}B_f)^*,$ then $y^{-1}B_g \in p.$

Therefore, the hypotheses are true for $T_{n+1}.$
\end{proof}

Now we are in a position to construct a huge amount of $J_{\delta}$-sets using the set $B_f(T)$ and some particular idempotent elements in the algebra $\delta S.$ Precisely, we have

\begin{thm}
Let $(S, \ast)$ be an adequate partial semigroup and let $A \subseteq S.$ If there is an idempotent $p \in \bar{A} \cap J_{\delta }(S).$ Then there is a non-empty set $T$ of functions such that:
\begin{itemize}
\item[(i)]  For all $f \in T$, domain$(f)\in \omega$ and range$(f) \subseteq A.$
\item[(ii)] For all $f \in T$ and all $x \in B_f(T)$, $B_{f^{\frown}x}(T) \subseteq x^{-1}B_f(T).$
\item[(iii)] For all $F \in \mathcal{P}_f(T)$, $\bigcap_{f \in F}B_f(T)$ is a $J_{\delta }$-set.
\end{itemize}

Moreover, there is a downward directed family $\langle A_F\rangle_{F \in I}$ of subsets of $A$ such that:
\begin{itemize}
\item[a)] For all $F \in I$ and all $x \in A_F,$ there exists $G \in I$ such that $A_G \subseteq x^{-1}A_F.$
\item[b)] For each $\mathcal{F} \in \mathcal{P}_f(I)$,
 $\bigcap_{F \in \mathcal{F}}A_F$ is a $J_{\delta}$-set.
 \end{itemize}
\end{thm}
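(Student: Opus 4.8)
The plan is to read off the set $T$ directly from Lemma~\ref{words} and then to harvest the combinatorial consequences from the hypothesis $p\in J_\delta(S)$. Since $p\in\bar A$ we have $A\in p$, and $p=p\ast p\in\delta S$, so Lemma~\ref{words} furnishes a nonempty set $T$ of functions satisfying conditions (a), (b), (c) of that lemma. Conditions (i) and (ii) of the present statement are then literally (a) and (c), so nothing remains to check for them. The one small observation I would record at the outset is that $B_f(T)\subseteq A$ for every $f\in T$: if $x\in B_f(T)$ then $f^{\frown}x\in T$, and since $\mathrm{range}(f^{\frown}x)\subseteq A$ by (a), in particular $x\in A$. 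This inclusion is what will let me produce a family of subsets of $A$ later.

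For part (iii) I would use condition (b) of Lemma~\ref{words}, which says $B_f(T)\in p$ for each $f\in T$. Because $p$ is an ultrafilter, any finite intersection $\bigcap_{f\in F}B_f(T)$ with $F\in\mathcal{P}_f(T)$ again lies in $p$. Since $p\in J_\delta(S)$, every member of $p$ is a $J_\delta$-set by the definition of $J_\delta(S)$, and hence $\bigcap_{f\in F}B_f(T)$ is a $J_\delta$-set. This same remark will also discharge clause (b) of the ``moreover'' part.

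For the downward directed family I would take the index set $I=\mathcal{P}_f(T)$ and set $A_F=\bigcap_{f\in F}B_f(T)$ for each $F\in I$. By the opening observation each $A_F\subseteq A$, and the family is downward directed because $A_{F_1\cup F_2}=A_{F_1}\cap A_{F_2}$. Clause (b) follows exactly as in part (iii): for $\mathcal{F}\in\mathcal{P}_f(I)$ we have $\bigcap_{F\in\mathcal{F}}A_F=\bigcap_{f\in\bigcup\mathcal{F}}B_f(T)\in p$, hence a $J_\delta$-set. The only step with genuine content is clause (a), and this is where I expect the main work. Given $F\in I$ and $x\in A_F$, so that $x\in B_f(T)$ for every $f\in F$, I would guess the witness $G=\{f^{\frown}x:f\in F\}\in I$. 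For each $f\in F$ condition (ii) gives $B_{f^{\frown}x}(T)\subseteq x^{-1}B_f(T)$, whence $A_G\subseteq\bigcap_{f\in F}x^{-1}B_f(T)$. The one identity I need is that $x^{-1}$ distributes over intersections, i.e. $\bigcap_{f\in F}x^{-1}B_f(T)=x^{-1}\bigl(\bigcap_{f\in F}B_f(T)\bigr)=x^{-1}A_F$; this is immediate from the definition $x^{-1}B=\{t\in\phi_S(x):x\ast t\in B\}$, since membership in each factor imposes the single common constraint $t\in\phi_S(x)$ together with $x\ast t\in B_f(T)$. Combining these gives $A_G\subseteq x^{-1}A_F$, as required. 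Thus the only subtlety is choosing $G$ as the collection of one-step extensions by $x$ and checking that $x^{-1}$ passes through the intersection; everything else is bookkeeping layered on top of Lemma~\ref{words} and the definition of $J_\delta(S)$.
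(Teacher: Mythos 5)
Your proposal is correct and follows essentially the same route as the paper's proof: invoke Lemma~\ref{words} for (i), (ii), and the membership $B_f(T)\in p$, use $p\in J_\delta(S)$ to get (iii) and clause b), and take $I=\mathcal{P}_f(T)$, $A_F=\bigcap_{f\in F}B_f(T)$ with witness $G=\{f^{\frown}x\mid f\in F\}$ for clause a). Your two added remarks --- that $B_f(T)\subseteq A$ (so the $A_F$ really are subsets of $A$) and that $x^{-1}$ passes through finite intersections --- are details the paper leaves implicit, and they are both verified correctly.
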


\begin{proof}

 Since $p$ is an idempotent element in $\delta S$ and $A \in p,$ then by Lemma \ref{words}, (i) and  (ii) are true and for each $f \in F$, $B_f(T) \in p.$ Therefore, $\bigcap_{f \in F}B_f(T) \in p.$ Now since $p \in J_{\delta}(S),$ then $\bigcap_{f \in F}B_f(T)$ is a $J_{\delta}$-set.  This concludes part (iii).

For the existence of a downward directed family we do the following construction.  Let $I \in \mathcal{P}_f(T).$ For $F \in I,$ define $A_F=\bigcap_{f \in F}B_f(T).$  For part a),  consider $G=\{f^{\frown}x \mid f \in F\}.$ Now by Lemma \ref{words}, for each $f \in F$, $B_{f^{\frown}x}(T) \subseteq x^{-1}B_f(T).$ This implies that $A_G \subseteq x^{-1}A_F.$  Next, for part b), using the previous arguments, we obtain  $A_F$ is a $J_{\delta}$-set. Given $\mathcal{F} \in \mathcal{P}_f(I),$ if $H= \bigcup \mathcal{F},$ then $\bigcap_{F \in \mathcal{F}}A_F=A_H. $ Therefore, $A_H$ is a $J_{\delta }$-set.
\end{proof}
and also, we can readily reduce
\begin{cor}
Let $(S, \ast)$ be a countable adequate partial semigroup and let $A \subseteq S.$ If there is an idempotent $p \in \bar{A} \cap J_{\delta}(S),$ then there is a decreasing sequence $\langle A_n \rangle_{n=1}^{\infty}$ of subsets of $A$ such that:
\begin{itemize}
\item[(i)] For all $n \in \mathbb{N}$ and all $x \in A_n,$ there exists $m \in \mathbb{N}$ such that $A_m \subseteq x^{-1}A_n.$
\item[(ii)] For all $n \in \mathbb{N}$, $A_n$ is a $J_{\delta}$-set.
\end{itemize}
\end{cor}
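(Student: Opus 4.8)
The plan is to derive this corollary as a direct specialization of the preceding theorem to the countable setting. Since $S$ is countable, the set $\mathcal{P}_f(T)$ of finite subsets of $T$ is also countable, so I would begin by enumerating the directed index family. Concretely, I would apply the theorem to obtain the non-empty set $T$ of functions together with the downward directed family $\langle A_F\rangle_{F \in I}$ indexed by $I = \mathcal{P}_f(T)$, inheriting properties (a) and (b). The goal is then to extract a single decreasing \emph{sequence} $\langle A_n\rangle_{n=1}^\infty$ from this net that retains both the self-refinement property (i) and the $J_\delta$-set property (ii).

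First I would fix an enumeration of $I = \mathcal{P}_f(T) = \{F_1, F_2, F_3, \ldots\}$, which is possible because $T$ is a countable set of functions with domains in $\omega$ and range in the countable set $S$. Then I would define the sequence by taking cumulative unions: set $H_n = \bigcup_{k=1}^n F_k$ and put $A_n = A_{H_n} = \bigcap_{f \in H_n} B_f(T)$. Because $H_1 \subseteq H_2 \subseteq \cdots$, the identity $\bigcap_{F \in \mathcal{F}} A_F = A_{\bigcup \mathcal{F}}$ established in the theorem's proof immediately gives $A_{n+1} \subseteq A_n$, so the sequence is decreasing. Property (ii) is then automatic: each $A_n = A_{H_n}$ is a $J_\delta$-set by part (b) of the theorem, since $H_n \in I$.

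The remaining task is property (i), which I expect to be the main technical point requiring care. Given $n \in \mathbb{N}$ and $x \in A_n = A_{H_n}$, part (a) of the theorem furnishes some $G \in I$ with $A_G \subseteq x^{-1}A_{H_n} = x^{-1}A_n$; inspecting the theorem's proof, this $G$ may be taken to be $\{f^{\frown}x \mid f \in H_n\}$. Since $G \in I = \mathcal{P}_f(T)$, it appears somewhere in the enumeration, say $G = F_j$, so that $G \subseteq H_j$ and therefore $A_m \subseteq A_G \subseteq x^{-1}A_n$ for $m = \max\{j, n\}$ (taking $m \geq n$ so that monotonicity of the sequence applies and keeps us inside a refinement of $A_n$). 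This yields the required $m$ and completes property (i).

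The one subtlety worth flagging is ensuring that the witness $G$ produced by part (a) genuinely lies in the index family $I$ and thus is captured by the enumeration; this is guaranteed because $G = \{f^{\frown}x \mid f \in H_n\}$ is a finite subset of $T$ whenever $x \in B_f(T)$ for the relevant $f$, and the membership $x \in A_{H_n} = \bigcap_{f \in H_n} B_f(T)$ ensures exactly that each $f^{\frown}x \in T$. Beyond this bookkeeping, the corollary is a routine diagonalization of the directed family into a sequence, so no new combinatorial or algebraic ideas are needed beyond those already deployed in the theorem.
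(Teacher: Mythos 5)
Your proof is correct and follows essentially the same route as the paper: the paper enumerates $T$ itself as $\{f_n\}$ and sets $A_n=\bigcap_{k=1}^n B_{f_k}(T)$, then for $x\in A_n$ picks $m$ so that $\{f_k^{\frown}x \mid k\le n\}\subseteq\{f_1,\dots,f_m\}$, which is the same cumulative-intersection-plus-enumeration argument you give via $\mathcal{P}_f(T)$. The only difference is the trivial bookkeeping choice of enumerating $\mathcal{P}_f(T)$ rather than $T$ directly.
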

\begin{proof}
Let $S$ be countable. Then $T$ is also countable. So identify $T$ as $\{f_n \mid  n \in \mathbb{N}\}$. For $n \in \mathbb{N},$ let $A_n=\bigcap_{k=1}^n B_{f_k}(T).$ Then each $A_n$ is a $J_{\delta}$-set. Let $x \in A_n.$ Now pick $m \in \mathbb{N}$ such that $\{{f_k}^{\frown}x \mid k \in \{1,2, \cdots, n\}\} \subseteq \{f_1,f_2, \cdots, f_m\}.$ Then $A_m \subseteq x^{-1}A_n.$
\end{proof}

\section{Construction of C-sets which are not central}
It is known that the combinatorial sets such as central sets, $C$-sets,  and $J_{\delta}$-sets have an extra importance in the study of combinatorics. Also, it is elementary from the definition that every central sets are $C$-sets. In this section, we try to find out a way to construct  a $C$-set which are not central. Before go into that we start with few basic facts.

\begin{lem}\label{semigroup}
Let $A$ and $B$ be semigroups and let $f: A \rightarrow B$ be a surjective homomorphism. If $A$ has a smallest ideal, show that $B$ does as well and that $K(B)=f(K(A)).$
\end{lem}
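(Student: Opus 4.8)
The plan is to prove the two assertions—that $B$ has a smallest ideal and that $K(B)=f(K(A))$—by exploiting the fact that a surjective homomorphism pushes forward the ideal structure cleanly. First I would observe the elementary closure facts: if $f:A\to B$ is a surjective homomorphism and $I$ is a left (respectively right, respectively two-sided) ideal of $A$, then $f(I)$ is a left (respectively right, respectively two-sided) ideal of $B$. The left-ideal case is the model computation: for $b\in B$ and $y\in f(I)$, surjectivity gives $a\in A$ with $f(a)=b$ and $x\in I$ with $f(x)=y$, so $by=f(a)f(x)=f(ax)\in f(I)$ because $ax\in I$. The right and two-sided cases are identical in spirit. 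In particular $f(K(A))$ is a two-sided ideal of $B$, so $B$ has at least one ideal, and the candidate for the smallest ideal is exactly $f(K(A))$.

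The core of the argument is to show $f(K(A))$ is contained in \emph{every} two-sided ideal $J$ of $B$; this both proves $B$ has a smallest ideal and pins it down as $f(K(A))$. Given an arbitrary two-sided ideal $J$ of $B$, I would form its preimage $f^{-1}(J)$ and check it is a two-sided ideal of $A$: for $a\in A$ and $x\in f^{-1}(J)$ we have $f(ax)=f(a)f(x)\in J$ since $J$ is a left ideal of $B$, so $ax\in f^{-1}(J)$, and symmetrically on the right. Then $f^{-1}(J)$ is a two-sided ideal of $A$, hence contains the smallest ideal: $K(A)\subseteq f^{-1}(J)$. Applying $f$ and using $f(f^{-1}(J))\subseteq J$ gives $f(K(A))\subseteq J$. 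Since $f(K(A))$ is itself a two-sided ideal contained in every two-sided ideal of $B$, it is by definition the smallest two-sided ideal of $B$, so $B$ has a smallest ideal and $K(B)=f(K(A))$.

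The step I expect to require the most care is the identity $f(f^{-1}(J))\subseteq J$ together with the implicit nonemptiness of $f^{-1}(J)$ and of $K(A)$. The inclusion $f(f^{-1}(J))\subseteq J$ is automatic for any function, but it is worth stating explicitly because it is exactly what converts the preimage computation back into a statement about $B$; one does not get equality in general, only the inclusion, and only the inclusion is needed. Nonemptiness of $f^{-1}(J)$ follows from surjectivity of $f$ (every element of the nonempty ideal $J$ has a preimage), and the hypothesis that $A$ has a smallest ideal guarantees $K(A)\neq\emptyset$, so $f(K(A))$ is a genuine nonempty ideal. No topology or right-topological structure is needed here—this is a purely algebraic fact about semigroups and surjective homomorphisms—so I would keep the proof at the level of the abstract ideal definitions and avoid invoking any of the $\beta S$ or $\delta S$ machinery.
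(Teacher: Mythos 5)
Your proof is correct, and it takes a genuinely different route from the paper's. You argue entirely with two-sided ideals: $f(K(A))$ is a two-sided ideal of $B$ (surjectivity being essential for the left-absorption $b\,y=f(a)f(x)=f(ax)$), and for any two-sided ideal $J$ of $B$ the preimage $f^{-1}(J)$ is a nonempty two-sided ideal of $A$, hence contains $K(A)$, giving $f(K(A))\subseteq f(f^{-1}(J))\subseteq J$; so $f(K(A))$ is itself the smallest ideal of $B$. The paper instead decomposes $K(A)=\bigcup\{L\mid L\ \text{a minimal left ideal of }A\}$, argues that each $f(L)$ is a minimal left ideal of $B$, and concludes $f(K(A))\subseteq K(B)$ before invoking minimality of $K(B)$. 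Your approach buys two things: it is more general, since it uses only the hypothesis that $A$ has a smallest ideal and never the structure theorem expressing $K(A)$ as a union of minimal left ideals (which is available for the compact right topological semigroup $\delta S$ where the lemma is applied, but not for an arbitrary semigroup with a smallest ideal); and it sidesteps the delicate minimality step in the paper, whose assertion that $J\subseteq f(L)$ forces $f^{-1}(J)\subseteq L$ is not literally valid as written (one should instead work with the left ideal $f^{-1}(J)\cap L$ of $A$ inside $L$). The paper's route, when carried out carefully, yields the extra information that minimal left ideals push forward to minimal left ideals, but for the stated lemma your direct sandwich argument is cleaner and fully sufficient.
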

\begin{proof}
Let $b \in B$ and $y \in f(K(A)).$ Since $f $ is surjective, there exists $a \in A$ such that $f(a)=b.$ Now since $y \in f(K(A)),$ then there exists $x \in K(A)$ such that $f(x)=y.$ As $a \cdot x \in K(A),$ then $b \cdot y= f(a) \cdot f(x)=f(a \cdot x) \in f(K(A)).$ Therefore, $f(K(A))$ is a left ideal in $B.$ Similarly, we can show that it is a right ideal in $B.$

$K(A)=\bigcup\{L \mid L  \; \text{is a minimal left ideal of} \; A\}.$ Then, it follows that  $$f(K(A))= \bigcup\{f(L)\mid L  \;  \text{is a minimal left ideal of} \; A\}.$$  Let $J \subseteq f(L),$ where $J$ is an ideal of $B.$ Then $f^{-1}(J) \subseteq L.$ But $L$ is minimal left ideal of $A,$ then $J=f(L).$ Therefore, $f(L)$ is minimal left ideal of $B.$ Then $f(K(A)) \subset K(B).$ But $K(B)$ is the smallest ideal, then $f(K(A))=K(B).$
\end{proof}

\begin{cor}\label{6.0.2}
Let $f : A \rightarrow B$ be a semigroup homomorphism. If $A$ has a smallest ideal, then $f(K(A))=K(f(A)).$

\end{cor}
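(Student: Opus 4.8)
The plan is to deduce Corollary~\ref{6.0.2} directly from Lemma~\ref{semigroup} by a restriction-of-codomain argument. The key observation is that Lemma~\ref{semigroup} concerns a \emph{surjective} homomorphism, whereas the corollary drops the surjectivity hypothesis; the natural remedy is to replace the codomain $B$ by the image $f(A)$, onto which $f$ is automatically surjective.

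First I would verify that $f(A)$ is a subsemigroup of $B$. This is routine: for $f(a_1), f(a_2) \in f(A)$ we have $f(a_1) \cdot f(a_2) = f(a_1 \cdot a_2) \in f(A)$ since $f$ is a homomorphism, so $f(A)$ is closed under the operation and is therefore a semigroup in its own right. Next I would regard $f$ as a map $f : A \rightarrow f(A)$; with this corestricted codomain the map is surjective by construction, and it remains a homomorphism because the operation on $f(A)$ is just the restriction of the operation on $B$.

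Having set this up, I would apply Lemma~\ref{semigroup} to the surjective homomorphism $f : A \rightarrow f(A)$. Since $A$ has a smallest ideal $K(A)$ by hypothesis, the lemma yields that $f(A)$ has a smallest ideal and that $K(f(A)) = f(K(A))$. This is precisely the assertion of the corollary, so the proof concludes immediately.

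I do not expect any genuine obstacle here, as the corollary is a formal consequence of the lemma via codomain restriction. The only point requiring a moment's care is the implicit claim that the smallest ideal $K(f(A))$ is an intrinsic notion depending only on the semigroup $f(A)$ and not on how $f(A)$ sits inside $B$ — but this is immediate, since $K$ of any semigroup with a smallest ideal is determined by the semigroup's own operation. One subtlety worth flagging is whether $f(A)$ is guaranteed to \emph{have} a smallest ideal; this is exactly what Lemma~\ref{semigroup} supplies from the assumption that $A$ does, so no separate argument is needed.
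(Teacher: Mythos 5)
Your proposal is correct and matches the paper's own proof exactly: both corestrict $f$ to its image $f(A)$, observe that this makes $f$ a surjective homomorphism onto the subsemigroup $f(A)$, and then apply Lemma~\ref{semigroup}. Your extra check that $f(A)$ is a subsemigroup is a harmless elaboration of what the paper leaves implicit.
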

\begin{proof}
If $f : A \rightarrow B$ is a semigroup homomorphism, then $f : A \rightarrow f(A)$ is a surjective homomorphism. Then by Lemma \ref{semigroup},  if $A$ has smallest ideal, then $f(A)$ also has smallest ideal and $f(K(A))= K(f(A)).$ 
\end{proof}

\begin{lem}\label{semihomo}
Let $S$ and $T$ be adequate partial semigroups. Let $h : S \rightarrow T$ be a surjective partial semigroup homomorphism, i.e., $h(S)=T.$ Let $\tilde{h} : \beta S \rightarrow \beta T$ be the continuous extension of $h.$ Then $\tilde{h}(\delta S) \subseteq \delta T$ and $\tilde{h} \mid_{\delta S}$ is a semigroup homomorphism.
\end{lem}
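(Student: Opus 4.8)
The plan is to verify the two assertions separately, using throughout the standard description of the continuous extension: for $p\in\beta S$ and $A\subseteq T$ one has $A\in\tilde{h}(p)$ if and only if $h^{-1}(A)\in p$. Recall also that membership in $\delta S$ is characterized by $\phi_S(x)\in p$ for every $x\in S$, since $p\in\delta S$ means exactly $p\in\bigcap_{x\in S}cl_{\beta S}(\phi_S(x))$, and $p\in cl_{\beta S}(\phi_S(x))$ is equivalent to $\phi_S(x)\in p$.

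For the inclusion $\tilde{h}(\delta S)\subseteq\delta T$, I would fix $p\in\delta S$ and an arbitrary $t\in T$ and show $\phi_T(t)\in\tilde{h}(p)$, that is, $h^{-1}(\phi_T(t))\in p$. Using surjectivity, choose $x\in S$ with $h(x)=t$. The homomorphism property gives that whenever $y\in\phi_S(x)$ one has $h(y)\in\phi_T(h(x))=\phi_T(t)$; hence $\phi_S(x)\subseteq h^{-1}(\phi_T(t))$. Since $p\in\delta S$ forces $\phi_S(x)\in p$, upward closure of the ultrafilter yields $h^{-1}(\phi_T(t))\in p$, and as $t$ was arbitrary, $\tilde{h}(p)\in\delta T$.

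For the homomorphism claim I would show $\tilde{h}(p\ast q)=\tilde{h}(p)\ast^{\prime}\tilde{h}(q)$ for $p,q\in\delta S$ by comparing which subsets $A\subseteq T$ belong to each side. Unwinding the definitions, $A\in\tilde{h}(p\ast q)$ holds iff $h^{-1}(A)\in p\ast q$ iff $\{s\in S\mid s^{-1}(h^{-1}(A))\in q\}\in p$, whereas $A\in\tilde{h}(p)\ast^{\prime}\tilde{h}(q)$ holds iff $\{t\in T\mid t^{-1}A\in\tilde{h}(q)\}\in\tilde{h}(p)$, i.e.\ iff $h^{-1}(\{t\in T\mid t^{-1}A\in\tilde{h}(q)\})=\{s\in S\mid h^{-1}(h(s)^{-1}A)\in q\}\in p$. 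So it suffices to prove that, for every $s\in S$, one has $s^{-1}(h^{-1}(A))\in q$ if and only if $h^{-1}(h(s)^{-1}A)\in q$.

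The crux, and the step I expect to be the main obstacle, is that these two sets are \emph{not} equal: the homomorphism property only gives $u\in\phi_S(s)\Rightarrow h(u)\in\phi_T(h(s))$, not the converse, so one always has $s^{-1}(h^{-1}(A))\subseteq h^{-1}(h(s)^{-1}A)$ but the difference $D$ may be nonempty. The resolution is to observe that any $u$ in the difference must fail $u\in\phi_S(s)$, so $D\subseteq S\setminus\phi_S(s)$. Since $q\in\delta S$ gives $\phi_S(s)\in q$, we get $D\notin q$; and for an ultrafilter, $X\cup D\in q$ together with $D\notin q$ forces $X\in q$. Thus the two sets agree modulo $q$, which gives the desired equivalence for each $s$, hence the equality of the two defining sets inside $p$, and therefore $\tilde{h}(p\ast q)=\tilde{h}(p)\ast^{\prime}\tilde{h}(q)$. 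Chaining these equivalences completes the proof.
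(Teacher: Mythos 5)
Your proposal is correct, and it is worth noting that the paper itself offers no argument for this lemma at all: it simply cites \cite[Theorem 4.22.3]{HS12}. So your blind proof is necessarily a different route in the sense that it is a self-contained ultrafilter-level verification rather than a deferral. Both halves check out. The inclusion $\tilde{h}(\delta S)\subseteq\delta T$ is exactly where surjectivity enters (you need a preimage $x$ of an arbitrary $t\in T$ to get $\phi_S(x)\subseteq h^{-1}(\phi_T(t))$), and you use it there and nowhere else, which is the right accounting. More importantly, you correctly identify the one genuinely delicate point in the homomorphism claim: $s^{-1}(h^{-1}(A))$ and $h^{-1}(h(s)^{-1}A)$ need not coincide, because the partial-semigroup homomorphism condition is only a one-way implication on definedness. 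Your resolution --- that the discrepancy set $D$ is contained in $S\setminus\phi_S(s)$, hence is $q$-null precisely because $q\in\delta S$ forces $\phi_S(s)\in q$, so the two sets are $q$-equivalent and in fact the two outer sets $\{s\in S\mid\cdot\}$ coincide exactly --- is the correct and essentially canonical way to handle it; it is the same mechanism that makes $\delta S$ (rather than all of $\beta S$) the right domain for the operation. The payoff of your approach over the paper's citation is that it makes visible exactly which hypotheses (surjectivity, membership of $q$ in $\delta S$) are doing work at each step.
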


\begin{proof}
For the proof, see \cite[Theorem 4.22.3]{HS12}.
\end{proof}

\begin{rmk}\label{rmk}
By Lemma \ref{semihomo} and Corollary \ref{6.0.2}, $\tilde{h}(K(\delta S))= K(\tilde{h}(\delta S)).$
\end{rmk}

\begin{thm} \label{Jset}
Let $S$ and $T$ be two adequate commutative partial semigroups, let $h: S\rightarrow T$ be a surjective partial semigroup homomorphism.
 If $A$ is a $J_{\delta}$-set in $S,$ then $h(A)$ is a $J_{\delta}$-set in $T.$

\end{thm}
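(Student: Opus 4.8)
The plan is to verify the defining property of a $J_{\delta}$-set for $h(A)$ in $T$ by pulling the ambient data back along $h$ into $S$, applying the hypothesis that $A$ is a $J_{\delta}$-set there, and then pushing the resulting witnesses forward through $h$. So I would fix $F' \in \mathcal{P}_f(\mathcal{T}_T)$, say $F' = \{g_1, \dots, g_k\}$, together with $W' \in \mathcal{P}_f(T)$, and aim to produce $b \in \sigma_T(W')$ and $H \in \mathcal{P}_f(\mathbb{N})$ such that $\prod_{t \in H} g_i(t) \in \sigma_T(W' \ast b)$ and $b \ast \prod_{t \in H} g_i(t) \in h(A)$ for every $i \in \{1,\dots,k\}$.

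Pulling back $W'$ is immediate: since $h$ is surjective, choose for each $w' \in W'$ a preimage and collect these into $W \in \mathcal{P}_f(S)$ with $h(W) = W'$. The delicate point is to lift each $g_i$ to an \emph{adequate} sequence $f_i$ in $S$ with $h \circ f_i = g_i$; this is the step where partiality (as opposed to the totally defined semigroup case) genuinely bites, because arbitrary termwise preimages need not have all their finite products defined in $S$. The construction I would use is inductive: having chosen $f_i(1), \dots, f_i(n)$ with matching images and all partial products defined, let $P_n$ be the finite set of those partial products and choose $f_i(n+1) \in h^{-1}(\{g_i(n+1)\}) \cap \sigma_S(P_n)$, using adequacy of $S$ to keep $\sigma_S(P_n)$ available and surjectivity to keep the fiber populated. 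Showing that this intersection is nonempty, i.e. that each fiber meets the relevant $\sigma_S$, together with the second (cofinality) clause of adequacy for the resulting $f_i$, is the main obstacle and the heart of the argument; the adequacy of $g_i$ in $T$ is what should feed this, since it controls how the tails $FP(\langle g_i(n)\rangle_{n=m}^\infty)$ sit inside the sets $\sigma_T(\cdot)$.

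Granting the lift, I set $F = \{f_1, \dots, f_k\} \in \mathcal{P}_f(\mathcal{T}_S)$ and apply the hypothesis that $A$ is a $J_{\delta}$-set to $F$ and $W$: there exist $a \in \sigma_S(W)$ and $H \in \mathcal{P}_f(\mathbb{N})$ with $\prod_{t \in H} f_i(t) \in \sigma_S(W \ast a)$ and $a \ast \prod_{t \in H} f_i(t) \in A$ for each $i$. I then take $b = h(a)$ and this same $H$. Because $h$ is a partial semigroup homomorphism it preserves every product that is defined, so $h(\prod_{t \in H} f_i(t)) = \prod_{t \in H} g_i(t)$, and hence $b \ast \prod_{t \in H} g_i(t) = h(a \ast \prod_{t \in H} f_i(t)) \in h(A)$, which yields the required membership.

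It remains to transport the two $\sigma$-conditions, and here the homomorphism clause that $y \in \phi_S(x)$ implies $h(y) \in \phi_T(h(x))$ does the work. From $a \in \sigma_S(W)$ I obtain $b = h(a) \in \sigma_T(h(W)) = \sigma_T(W')$; and since every element of $W' \ast b$ is the $h$-image of some element of $W \ast a$ (using that $a$ is multipliable with all of $W$ and that $h$ maps $W$ onto $W'$), the relation $\prod_{t \in H} f_i(t) \in \sigma_S(W \ast a)$ transports to $\prod_{t \in H} g_i(t) \in \sigma_T(W' \ast b)$. Assembling these facts shows that $h(A)$ meets the definition of a $J_{\delta}$-set in $T$. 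The only non-routine ingredient is the adequate-sequence lifting lemma outlined above, which is where I expect essentially all of the difficulty to lie.
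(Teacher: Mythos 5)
Your overall strategy --- pull $W'$ and the test sequences back along $h$, apply the $J_{\delta}$-property of $A$ in $S$, and push the witnesses $a$ and $H$ forward through $h$ --- is exactly the strategy of the paper's proof, and the forward-transport half of your argument is correct and matches the paper: $b=h(a)\in\sigma_T(W')$ because $h$ carries $\phi_S$-relations to $\phi_T$-relations, $\prod_{t\in H}g_i(t)\in\sigma_T(W'\ast b)$ because every element of $W'\ast b$ is an $h$-image of an element of $W\ast a$, and $b\ast\prod_{t\in H}g_i(t)=h\bigl(a\ast\prod_{t\in H}f_i(t)\bigr)\in h(A)$.

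The gap is exactly where you locate it: lifting each adequate sequence $g_i\in\mathcal{T}_T$ to an adequate sequence $f_i\in\mathcal{T}_S$ with $h\circ f_i=g_i$. Your inductive recipe needs $h^{-1}(\{g_i(n+1)\})\cap\sigma_S(P_n)\neq\emptyset$; adequacy of $S$ gives $\sigma_S(P_n)\neq\emptyset$ and surjectivity gives $h^{-1}(\{g_i(n+1)\})\neq\emptyset$, but nothing in the hypotheses forces these two sets to meet --- a fiber of $h$ can in principle be disjoint from $\sigma_S(P_n)$. Moreover, even granting that all finite products of the lift are defined, the second clause of adequacy (for every $K\in\mathcal{P}_f(S)$ some tail of $FP(\langle f_i(n)\rangle_{n=m}^{\infty})$ lies in $\sigma_S(K)$) quantifies over finite subsets of $S$, whereas the adequacy of $g_i$ only controls membership in $\sigma_T$ of finite subsets of $T$; without some extra assumption relating $\phi_S$ to $h^{-1}(\phi_T(\cdot))$, the former cannot be deduced from the latter. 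You should know that the paper's own proof does not fill this hole either: it picks a pointwise section $k$ of $h$, sets $G=\{k\circ f\mid f\in F\}$, and simply asserts $G\in\mathcal{P}_f(\mathcal{T}_S)$ with no argument that $k\circ f$ is adequate (or even that its finite products are defined). So your proposal reproduces the paper's argument and, to its credit, makes the unproved step explicit; but as written neither is a complete proof, and the missing lifting lemma is a genuine additional statement to be proved (or hypothesized), not a routine verification.
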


\begin{proof}

 Given $A$ is a $J_{\delta}$-set in $S.$ Let $F \in \mathcal{P}_f(\mathcal{T}_T)$ and $W \in \mathcal{P}_f(T).$ Pick $k: T \rightarrow S$ such that $h\circ k(x)=x$ locally (pointwise). Construct $G=k(F)=\{k\circ f \mid  f \in F\}$ and $W_S= k(W).$ Then $G \in \mathcal{P}_f(\mathcal{T}_S)$ and $W_S \in \mathcal{P}_f(S).$ Since $A$ is a $J_{\delta}$-set in $S,$ then there exist $a \in \sigma_S (W_S)$ and $H \in \mathcal{P}_f(\mathbb{N})$ such that for each $g \in G$,

\begin{equation}\label{2}
\prod_{t \in H}g(t) \in \sigma_S(W_S \ast a) \; \mbox{and} \; a \ast \prod_{t \in H}g(t) \in A.
\end{equation}  Choose $b \in T$ such that $b=h(a)$ and $$h(a) \in h  (\sigma_S(W_S)) \subseteq \sigma_T(h(W_S)) = \sigma_T(h \circ k(W))= \sigma_T(W)$$ Now from one part of the condition (6.1), we have $$h(\prod_{t \in H}g(t) )\in h(\sigma_S(W_S \ast a))$$ which implies $\prod_{t \in H}h(g(t)) \in \sigma_T(h(W_S) \ast h(a)),$ and, so that $\prod_{t \in H}h\circ k \circ f(t) \in \sigma_T(W \ast b).$ This implies $\prod_{t \in H}f(t) \in \sigma_T(W \ast b)$ and the other part of the condition \eqref{2} gives us that  $h(a \ast \prod_{t \in H}g(t)) \in h(A).$ Since $h$ is partial semigroup homomorphism, then we get, $h(a) \ast \prod_{t \in H}h(g(t)) \in h(A)$ which clearly implies that $b \ast \prod_{t \in H}f(t) \in h(A).$ Therefore, $h(A)$ is a $J_{\delta}$-set in $T.$ 
\end{proof}

\begin{thm}\label{central}
Let $S$ and $T$ be two adequate commutative partial semigroups, let $h: S\rightarrow T$ be a surjective partial semigroup homomorphism. Let $\tilde{h} : \beta S \rightarrow \beta T$ be the continuous extension of $h.$ If $\tilde{h}: \delta S \rightarrow \delta T$ induces surjective map then
\begin{itemize}
\item[(a)] $\tilde{h}(K(\delta S))= K(\delta T).$ 

\item[(b)] If $A$ is a central set in $S,$ then $h(A)$ is a central set in $T.$ 
\item[(c)] If $A$ is a central set in $T,$ then $h^{-1}(A)$ is a central set in $S.$ 
\end{itemize}
 
\end{thm}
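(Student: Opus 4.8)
The plan is to treat the three parts in order, using that $\tilde h|_{\delta S}$ is a semigroup homomorphism (Lemma \ref{semihomo}) together with the standard description of the continuous extension: for $p\in\beta S$ and $B\subseteq T$ one has $B\in\tilde h(p)$ if and only if $h^{-1}(B)\in p$, equivalently $\tilde h(\overline C)\subseteq\overline{h(C)}$ for every $C\subseteq S$ (since $\tilde h$ is continuous and agrees with $h$ on $S$). Part (a) is then immediate: Remark \ref{rmk} gives $\tilde h(K(\delta S))=K(\tilde h(\delta S))$, and the hypothesis that $\tilde h$ maps $\delta S$ onto $\delta T$ means $\tilde h(\delta S)=\delta T$, whence $\tilde h(K(\delta S))=K(\delta T)$.

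For (b), suppose $A$ is central in $S$ and pick an idempotent $p\in\overline A\cap K(\delta S)$. Because $\tilde h|_{\delta S}$ is a homomorphism, $\tilde h(p)\ast\tilde h(p)=\tilde h(p\ast p)=\tilde h(p)$, so $\tilde h(p)$ is idempotent; by (a) it lies in $\tilde h(K(\delta S))=K(\delta T)$; and $p\in\overline A$ forces $\tilde h(p)\in\overline{h(A)}$. Hence $\tilde h(p)$ is an idempotent in $\overline{h(A)}\cap K(\delta T)$, so $h(A)$ is central in $T$.

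For (c), let $A$ be central in $T$ and fix an idempotent $q\in\overline A\cap K(\delta T)$. By (a), $q\in K(\delta T)=\tilde h(K(\delta S))$, so there is $s\in K(\delta S)$ with $\tilde h(s)=q$. Choose a minimal left ideal $L$ of $\delta S$ with $s\in L$; this $L$ is closed, since $L=\delta S\ast s=\rho_s(\delta S)$ is the continuous image (Lemma \ref{1.12}) of the compact space $\delta S$. Now set $E=L\cap\tilde h^{-1}(\{q\})$. It is nonempty (it contains $s$) and closed, hence compact. It is closed under $\ast$: if $p_1,p_2\in E$ then $p_1\ast p_2\in L$ because $L$ is a left ideal, and $\tilde h(p_1\ast p_2)=q\ast q=q$. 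Thus $E$ is a compact right topological semigroup and therefore contains an idempotent $e$. Since $e\in L\subseteq K(\delta S)$, $e$ is a minimal idempotent of $\delta S$, and from $\tilde h(e)=q$ with $A\in q$ we get $h^{-1}(A)\in e$ via $A\in\tilde h(e)\iff h^{-1}(A)\in e$. Hence $e\in\overline{h^{-1}(A)}\cap K(\delta S)$ is an idempotent, proving $h^{-1}(A)$ is central in $S$.

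I expect the one genuinely delicate step to be the minimal-idempotent lift in part (c). The naive attempt—finding an idempotent in $K(\delta S)\cap\tilde h^{-1}(\{q\})$ directly—fails because $K(\delta S)$ need not be closed, so one cannot apply the existence-of-idempotents theorem to it. The device is to descend to a single minimal left ideal $L$ through a chosen preimage $s$ of $q$: then $L\cap\tilde h^{-1}(\{q\})$ is a compact subsemigroup (using that $L$ is closed and is a left ideal), and any idempotent produced there lies in $L\subseteq K(\delta S)$ and is therefore automatically minimal.
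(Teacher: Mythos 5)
Your proposal is correct and follows essentially the same route as the paper: part (a) via the image-of-the-smallest-ideal lemma plus surjectivity of $\tilde h$ on $\delta S$, part (b) by pushing the minimal idempotent forward, and part (c) by lifting to a preimage in $K(\delta S)$, passing to a minimal left ideal $L$, and extracting an idempotent from the compact subsemigroup $L\cap\tilde h^{-1}(\{q\})$. Your added justifications (closedness of $L$, the subsemigroup check, and why one cannot work with $K(\delta S)$ directly) are details the paper leaves implicit.
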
 
\begin{proof}
(a) Given that  $\tilde{h}: \delta S \rightarrow \delta T$ is surjective semigroup homomorphism. Being compact right topological semigroup $\delta S$ has a smallest two sided ideal, then by Lemma \ref{semigroup},  $\delta T$ has also smallest two sided ideal and $K(\delta T)= \tilde{h}(K(\delta S)).$

(b) Let $A$ be a central set in $S,$ then there is an idempotent $p$ such that $p \in K(\delta S) \cap \overline{A}.$ Since $\tilde{h} \mid_{\delta S}$ is a semigroup homomorphism by Lemma \ref{semihomo} and hence $\tilde{h}(p)$ is an idempotent element and by part (a), $\tilde{h}(K(\delta S))= K(\delta T).$ Therefore, $\tilde{h}(p)$ is contained in $K(\delta T),$ and, so $\tilde{h}(p)$ is an idempotent element in $\overline{h(A)} \cap K(\delta T).$

(c) Pick an idempotent element $p$ in $K(\delta T) \cap \overline{A}.$ By part (a), pick $q \in K(\delta S)$ such that $\tilde{h}(q)=p.$ Now pick a minimal left ideal $L$ of $ \delta S$ such that $q \in L.$ Then $L \cap \tilde{h}^{-1}(\{p\})$ is a compact subsemigroup of $\delta S.$ So there is an idempotent $r \in L \cap \tilde{h}^{-1}(\{p\}).$ Since $A \in p$ and $\tilde{h}(r)=p$, then $h^{-1}(A) \in r.$ Thus, $h^{-1}(A)$ is a central set in $S.$
\end{proof}

\begin{thm}\label{cset}
Let $S$ and $T$ be two adequate commutative partial semigroups, let $h: S\rightarrow T$ be a surjective partial semigroup homomorphism. Let $\tilde{h} : \beta S \rightarrow \beta T$ be the continuous extension of $h.$

\begin{itemize}
\item[(a)] $\tilde{h}(J_{\delta}(S)) \subseteq J_{\delta }(T)$
\item[(b)] If there is an idempotent $p \in \overline{A} \cap J_{\delta}(S),$ then $h(A)$ is a $C$-set in $T.$

\item[(c)]If $p \in \overline{A} \cap \tilde{h}(J_{\delta }(S)),$ then $h^{-1}(A)$ is a $C$-set in $S .$
\end{itemize}
\end{thm}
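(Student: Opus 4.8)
The plan is to prove the three parts of Theorem \ref{cset} by assembling the machinery developed earlier, using the surjective homomorphism theorem for $J_\delta$-sets (Theorem \ref{Jset}), the characterization of idempotents in $\delta S$ via the tree of functions (Lemma \ref{words}), and the characterization of $C$-sets as members of idempotents in $J_\delta(S)$ from \cite[Theorem 3.4]{AG18}. The conceptual engine behind all three is that $C$-sets are exactly the members of idempotents lying in $J_\delta(S)$, so proving something is a $C$-set amounts to producing a suitable idempotent.

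For part (a), I would take $q \in J_\delta(S)$ and show $\tilde{h}(q) \in J_\delta(T)$. By definition of $J_\delta(T)$, I must verify that every $B \in \tilde{h}(q)$ is a $J_\delta$-set in $T$. The key observation is that $B \in \tilde{h}(q)$ means $h^{-1}(B) \in q$; since $q \in J_\delta(S)$, the set $h^{-1}(B)$ is a $J_\delta$-set in $S$. Then Theorem \ref{Jset} gives that $h(h^{-1}(B))$ is a $J_\delta$-set in $T$, and since $h$ is surjective $h(h^{-1}(B)) = B$, so $B$ is a $J_\delta$-set. Hence $\tilde{h}(q) \in J_\delta(T)$, proving the inclusion. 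The only delicate point is confirming the equivalence $B \in \tilde{h}(q) \iff h^{-1}(B) \in q$, which follows from the standard description of the continuous extension $\tilde{h}$ of $h$ to the Stone-\v{C}ech compactification.

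For part (b), suppose $p = p \ast p \in \overline{A} \cap J_\delta(S)$. Since $\tilde{h}\!\mid_{\delta S}$ is a semigroup homomorphism by Lemma \ref{semihomo}, $\tilde{h}(p)$ is an idempotent in $\delta T$; and since $A \in p$ we get $h(A) \in \tilde{h}(p)$, i.e.\ $\tilde{h}(p) \in \overline{h(A)}$. By part (a), $\tilde{h}(p) \in J_\delta(T)$. Thus $\tilde{h}(p)$ is an idempotent in $\overline{h(A)} \cap J_\delta(T)$, and by the characterization of $C$-sets as members of idempotents in $J_\delta(T)$ (\cite[Theorem 3.4]{AG18}), $h(A)$ is a $C$-set in $T$. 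For part (c), assume $p \in \overline{A} \cap \tilde{h}(J_\delta(S))$; write $p = \tilde{h}(q)$ with $q \in J_\delta(S)$. Mirroring the central-set argument in Theorem \ref{central}(c), I would replace $q$ by an actual idempotent: pass to a minimal left ideal or use that $J_\delta(S)$ is a closed two-sided ideal (Theorem \ref{ideal}) to locate an idempotent $r \in J_\delta(S) \cap \tilde{h}^{-1}(\{p\})$ inside an appropriate compact subsemigroup. Since $A \in p = \tilde{h}(r)$, we get $h^{-1}(A) \in r$, so $r$ is an idempotent in $\overline{h^{-1}(A)} \cap J_\delta(S)$, whence $h^{-1}(A)$ is a $C$-set in $S$.

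The main obstacle will be part (c), specifically producing the idempotent $r$ in $J_\delta(S) \cap \tilde{h}^{-1}(\{p\})$. The set $\tilde{h}^{-1}(\{p\})$ is a closed subset of $\delta S$, and one needs it to meet $J_\delta(S)$ in a way that yields an idempotent whose image is exactly $p$. The cleanest route is to exploit that $p = \tilde{h}(q)$ for some $q$ already in $J_\delta(S)$: intersecting $\tilde{h}^{-1}(\{p\})$ with a minimal left ideal $L$ containing a suitable minimal point below $q$ gives a nonempty compact subsemigroup (nonempty because $\tilde{h}$ maps it onto something containing $p$, compact as a closed subset of the compact space $\delta S$, and a subsemigroup because $\tilde{h}$ is a homomorphism and $p$ is idempotent). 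Every compact right topological semigroup has an idempotent, which furnishes $r$; one must then check $r \in J_\delta(S)$, which holds because $J_\delta(S)$ is a closed ideal (Theorem \ref{ideal}) containing $K(\delta S) \supseteq L$. Care is needed to ensure all these containments are consistent so that $r$ genuinely lies in $J_\delta(S)$ and satisfies $\tilde{h}(r) = p$ simultaneously.
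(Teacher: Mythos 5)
Your proposal is correct and follows essentially the same route as the paper: part (a) via $B\in\tilde{h}(q)\iff h^{-1}(B)\in q$ together with Theorem \ref{Jset} and surjectivity, part (b) by pushing the idempotent forward and invoking \cite[Theorem 3.4]{AG18}, and part (c) by extracting an idempotent from the compact subsemigroup $J_{\delta}(S)\cap\tilde{h}^{-1}(\{p\})$ (which is nonempty because it contains a preimage $q\in J_{\delta}(S)$ of $p$, and is a subsemigroup because $p$ is idempotent --- an assumption both you and the paper's proof use, though the statement omits the word). One caution: of the two options you float for (c), only the $J_{\delta}(S)\cap\tilde{h}^{-1}(\{p\})$ route works; the minimal-left-ideal variant is the argument for Theorem \ref{central}(c) and would require $p\in\tilde{h}(K(\delta S))$, whereas here $q$ need only lie in $J_{\delta}(S)$, so a minimal left ideal need not meet $\tilde{h}^{-1}(\{p\})$.
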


\begin{proof}
(a) Let $p \in J_{\delta }(S).$ We need to show that $\tilde{h}(p) \in J_{\delta}(T).$ Now let $A \in \tilde{h}(p).$ Then $h^{-1}(A) \in p.$ Therefore, $h^{-1}(A)$ is a $J_{\delta}$-set in $S.$ Now by Lemma \ref{Jset}, $A=h(h^{-1}(A))$ is a $J_{\delta}$-set in $T.$ Therefore, $\tilde{h}(p) \in J_{\delta}(T).$

(b) Let $p \in \overline{A} \cap J_{\delta}(S).$ Since $\tilde{h} \mid_{\delta S}$ is a semigroup homomorphism by Lemma \ref{semihomo} and hence $\tilde{h}(p)$ is an idempotent element in $\delta T.$ Now since $p \in \overline{A},$ then $h(A) \in \tilde{h}(p),$ and  since $p \in J_{\delta}(S),$ then $p \in \delta S $ such that for all $B \in p$, B is a $J_{\delta}$-set in $S.$ Therefore, $\tilde{h}(p) \in \delta T$ where $h(B) \in \tilde{h}(p)$ and $h(B)$ is a $J_{\delta}$-set in $T$ by Lemma \ref{Jset}. 

Now let $D\in \tilde{h}(p).$ We want to show that $D$ is a $J_{\delta}$-set in $T.$ Now for a surjective map $h$, $D=h(h^{-1}(D)).$ Since $h^{-1}(D) \in p,$ then $h^{-1}(D)$ is a $J_{\delta}$-set in $S$ and again by Lemma \ref{Jset} $D$ is a $J_{\delta}$-set in $T.$  Therefore, we get an idempotent element in $\overline{h(A)} \cap J_{\delta}(T).$ Then by  Theorem \cite[Theorem 3.4]{AG18}, $h(A)$ is a $C$-set in $T.$


 (c) Let $p$ be an idempotent element such that $p \in \overline{A} \cap \tilde{h}(J_{\delta }(S)).$ By Theorem \ref{ideal}, $J_{\delta}(S)$ is closed two sided ideal of $\delta S.$ Now being a closed subset of compact set $\delta S$, $J_{\delta }(S)$ is compact and $J_{\delta}(S) \cap \tilde{h}^{-1}(\{p\})$ is a compact subsemigroup of $\delta S,$ where $\tilde{h}^{-1}(\{p\})=\{r \in \delta S \mid \tilde{h}(r)=p\}.$ Therefore, there is an idempotent $q \in J_{\delta}(S) \cap \tilde{h}^{-1}(\{p\}).$ Since  $A \in p$ and $\tilde{h}(q)=p,$ then $h^{-1}(A) \in q.$ Then by  Theorem \cite[Theorem 3.4]{AG18}, $h^{-1}(A)$ is a $C$-set in $S.$
 \end{proof}
 
 The following corollary is a direct consequence of Theorem \ref{central} and Theorem \ref{cset}.
\begin{cor}\label{cor}
Let $S$ and $T$ be two adequate commutative partial semigroups, let $h: S\rightarrow T$ be a surjective partial semigroup homomorphism. Let $\tilde{h} : \beta S \rightarrow \beta T$ be the continuous extension of $h$ and  $\tilde{h}: \delta S \rightarrow \delta T$ induces surjective map. If there is an idempotent $p \in \overline{A} \cap \tilde{h}(J_{\delta }(S)),$ where $A$ is not a central set in $T,$ then  $h^{-1}(A)$ is a $C$-set in $S$ but not a central set in $S.$
\end{cor}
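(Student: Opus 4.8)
The plan is to assemble Corollary \ref{cor} directly from the two structural theorems already in hand, so the argument is really a matter of threading the hypotheses through the correct results rather than building anything new. First I would fix the setup: we are given adequate commutative partial semigroups $S$ and $T$, a surjective partial semigroup homomorphism $h:S\rightarrow T$ whose continuous extension $\tilde h$ restricts to a surjective map $\delta S\rightarrow\delta T$, and an idempotent $p\in\overline{A}\cap\tilde h(J_\delta(S))$ where $A$ is not central in $T$. The goal splits into two independent claims about $h^{-1}(A)$: that it is a $C$-set in $S$, and that it is \emph{not} central in $S$.

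For the positive claim, I would invoke Theorem \ref{cset}(c) verbatim. Its hypothesis is exactly that there is an idempotent $p\in\overline{A}\cap\tilde h(J_\delta(S))$, and its conclusion is precisely that $h^{-1}(A)$ is a $C$-set in $S$. So this half requires no work beyond citing that part. The substance of Corollary \ref{cor} is therefore entirely in the second half, and I expect the main obstacle to lie there: showing $h^{-1}(A)$ is not central in $S$.

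For the negative claim I would argue by contraposition using Theorem \ref{central}(b). Suppose, for contradiction, that $h^{-1}(A)$ \emph{were} central in $S$. The hypotheses of Theorem \ref{central} are met, since $\tilde h:\delta S\rightarrow\delta T$ is surjective by assumption, so part (b) applies and tells us that $h\bigl(h^{-1}(A)\bigr)$ is central in $T$. Here the surjectivity of $h$ is what I would use to simplify $h\bigl(h^{-1}(A)\bigr)=A$, giving that $A$ is central in $T$. This directly contradicts the standing hypothesis that $A$ is not central in $T$. Hence $h^{-1}(A)$ cannot be central in $S$.

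The one point I would check carefully, and where the only genuine subtlety resides, is the identity $h\bigl(h^{-1}(A)\bigr)=A$: this equality requires surjectivity of $h$ (for the inclusion $A\subseteq h(h^{-1}(A))$, since every point of $A$ has a preimage), while the reverse inclusion $h(h^{-1}(A))\subseteq A$ holds for any function. Since $h$ is assumed surjective as a partial semigroup homomorphism, both inclusions hold and the identity is valid. Combining the two halves, $h^{-1}(A)$ is a $C$-set in $S$ that fails to be central in $S$, which is the assertion of the corollary. The argument is short precisely because Theorem \ref{central} and Theorem \ref{cset} have already done the heavy lifting; the corollary's role is to juxtapose the preimage-of-$C$-set conclusion of one with the preimage-of-central conclusion of the other.
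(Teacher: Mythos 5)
Your proof is correct and follows exactly the route the paper intends: the paper gives no explicit argument, stating only that the corollary is ``a direct consequence of Theorem \ref{central} and Theorem \ref{cset},'' and your assembly---Theorem \ref{cset}(c) for the $C$-set conclusion, plus the contrapositive of Theorem \ref{central}(b) together with $h(h^{-1}(A))=A$ for the non-centrality---is precisely that direct consequence, with the surjectivity point checked where it matters.
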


\begin{exam}
Define $h:(\mathcal{P}_f(\mathbb{N}), \uplus) \to (\mathbb{N},+) $ by $h(A)=|A|,$ where $|A|$ is cardinality of the set $A.$ Then, $h$ is a surjective partial semigroup homomorphism. Let $\tilde{h} : \beta \mathcal{P}_f(\mathbb{N}) \rightarrow \beta \mathbb{N}$ be the continuous extension of $h.$ Then, $\tilde{h}(\delta \mathcal{P}_f(\mathbb{N}))= \beta \mathbb{N}.$ In \cite [Theorem 2.8]{HS9} Hindman and Strauss gave an example which is a $C$-set in $(\mathbb{N}, +),$ but not central. Since the set $A$ produced in \cite[Theorem 2.8]{HS9} is a $C$-set, then there is an idempotent element $p \in \overline{A} \cap J(\mathbb{N},+)$ by \cite[Theorem 2.5]{HS09}. As $J_{\delta}(\mathbb{N}, +) =J(\mathbb{N}, +),$ therefore, we have that $p \in \overline{A} \cap J_{\delta}(\mathbb{N}, +).$ So, in our case, $\; \tilde{h}(J_{\delta}(\mathcal{P}_f(\mathbb{N}), \uplus))= J_{\delta}(\mathbb{N}, +)$. Thus, Theorem \ref{cset} yields $h^{-1}(A)$ is a $C$-set, but by Corollary \ref{cor}, it is not central set in $(\mathcal{P}_f(\mathbb{N}), \uplus).$
\end{exam}

\end{document}